\documentclass{amsart}
\usepackage{amsthm}
\usepackage[osf,theoremfont]{newpxtext} % T1, lining figures in math, osf in text
\usepackage[smallerops,varg,bigdelims]{newpxmath}
\usepackage[scr=rsfso]{mathalfa}% \mathscr is fancier than \mathcal
\usepackage{bm} % load after all math to give access to bold math
%\useosf %no longer needed unless using babel
\linespread{1.05}% Give Palatino more leading (space between lines)\usepackage{amsmath}
\usepackage{graphicx}
\usepackage{cite}
\usepackage{tikz}
\usepackage{enumerate}

\theoremstyle{plain}% default
\newtheorem{thm}{Theorem}
\newtheorem*{thm*}{Theorem}
\newtheorem{lem}[thm]{Lemma}
\newtheorem{prop}[thm]{Proposition}
\newtheorem{cor}[thm]{Corollary}

\theoremstyle{definition}
\newtheorem{defn}[thm]{Definition}
\newtheorem*{defn*}{Definition}

\newtheorem{exmp}[thm]{Example}

\theoremstyle{remark}
\newtheorem*{rem}{Remark}
\newtheorem*{note}{Note}

 %To-Do Items

% macro for forking symbol \ind, and \nind for the negation
\def\Ind{\setbox0=\hbox{$x$}\kern\wd0\hbox to 0pt{\hss$\mid$\hss}
	\lower.9\ht0\hbox to 0pt{\hss$\smile$\hss}\kern\wd0}

\def\Notind{\setbox0=\hbox{$x$}\kern\wd0\hbox to 0pt{\mathchardef
		\nn=12854\hss$\nn$\kern1.4\wd0\hss}\hbox to
	0pt{\hss$\mid$\hss}\lower.9\ht0 \hbox to 0pt{\hss$\smile$\hss}\kern\wd0}

\def\ind{\mathop{\mathpalette\Ind{}}}

%\usepackage{xcomment} Use xcomment to output only the listed environments
%\xcomment{thm,prop,cor,fact,defn,rem,lem,note} 

\begin{document}
\title[Locally Roelcke precompact Polish groups]{Locally Roelcke precompact Polish groups}
\author{Joseph Zielinski}
\address{Department of Mathematical Sciences,
Carnegie Mellon University,
Pittsburgh, Pennsylvania 15213-3890}
\email{zielinski.math@gmail.com}
\urladdr{http://math.cmu.edu/~zielinski}
\begin{abstract}
A Polish group is said to be \emph{locally Roelcke precompact} if there is a neighborhood of the identity element that is totally bounded in the Roelcke (or lower) group uniformity. These form a subclass of the locally bounded groups, while generalizing the Roelcke precompact and locally compact Polish groups.

We characterize these groups in terms of their geometric structure as those locally bounded groups whose coarsely bounded sets are all Roelcke precompact, and in terms of their uniform structure as those groups whose completions in the Roelcke uniformity are locally compact. We also assess the conditions under which this locally compact space carries the structure of a semi-topological semigroup. 
\end{abstract}
\maketitle

\section{Introduction}
The investigation undertaken in the present note sits at the nexus of two active areas of research into the structure of topological groups. The first of these is the topological dynamics of those groups---such as the full symmetric group on $ \mathbb{N} $, the unitary group of $ \ell_{2} $, or the automorphism group of the measure algebra of a standard Lebesgue space---that, while not necessarily compact or locally compact, are totally bounded in the \emph{Roelcke uniformity}. This feature and the connection it engenders between such groups themselves and those spaces upon which they act have been studied extensively, for example in \cite{uspenskij1998roelcke,uspenskij2001compactifications,uspenskij2008subgroups,glasner2012group,tsankov2012unitary,rosendal2013global,ben2016weakly,ibarlucia2016dynamical,ben2018eberlein}. The second follows the discovery of C. Rosendal that topological groups carry an intrinsic large-scale geometry. An early theory of this structure can be found in the preprints \cite{rosendal2014largemet,rosendal2014largeaut}, which have been collected into and elaborated upon in the manuscript \cite{rosendal-book}, and has also been explored in the papers \cite{mann2017large,zielinski2016automorphism,cohen2018large,herndon2018absolute}.

So a natural question goes, what does the study of the coarse geometry of topological groups have to say about these Roelcke precompact groups? The first answer is \emph{nothing}. All Roelcke precompact groups are coarsely bounded, and therefore have trivial coarse geometry. A second answer is subject of this paper: many of the features of the Roelcke precompact groups can be seen in a wider class of groups, one that contains both the Roelcke precompact and the locally compact groups, and for these groups those features are also reflected in their large-scale geometry.

A Polish group is \emph{locally Roelcke precompact} if some open set is totally bounded in the Roelcke uniformity. All Roelcke precompact and all locally compact Polish groups are locally Roelcke precompact. So too are the isometry groups of certain unbounded ultrahomogeneous metric spaces that behave locally like those with Roelcke precompact isometry groups (e.g., the isometry group of the Urysohn space, see Theorem \ref{example schema for lrpc isometry groups}).

The central result of this paper characterizes such groups in terms of both their uniform structure and their coarse geometry.

\begin{thm*}[below as Theorems \ref{LRPC iff locally bounded and OB=RPC} and \ref{LRPC iff completion is locally compact}]
	Suppose $ G $ is a Polish group and $ X $ is its completion in the Roelcke uniformity. The following are equivalent:
	\begin{itemize}
		\item $ G $ is locally Roelcke precompact.
		\item $ G $ is locally bounded, and every coarsely bounded subset of $ G $ is a Roelcke precompact set.
		\item $ X $ is locally compact.
	\end{itemize} 
\end{thm*}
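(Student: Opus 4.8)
The whole argument runs through the Roelcke completion $X$, so I would begin by recording the standard facts about it. The Roelcke uniformity is compatible with the topology of $G$: since $(u,v)\mapsto ugv$ is continuous and carries $(e,e)$ to $g$, the sets $UgU$, with $U$ ranging over neighbourhoods of the identity, form a neighbourhood base at $g$. Hence $G$ sits inside $X$ as a dense subspace carrying its original topology, and, because $X$ is complete, the Roelcke precompact subsets of $G$ are precisely the subsets whose closure in $X$ is compact. I would also note that left translations, right translations, and inversion are Roelcke-uniformly continuous (for a \emph{fixed} group element one only needs that conjugation by it is a homeomorphism), so they extend to homeomorphisms of $X$; consequently, for every neighbourhood $W\ni e$ and every $g\in G$ the sets $\overline{W}^{X}$ and $g\,\overline{W}^{X}=\overline{gW}^{X}$ are neighbourhoods in $X$ of $e$ and of $g$ respectively. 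Finally, $X$ is Polish.

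\textbf{The equivalence with local compactness of $X$.} The direction ``$X$ locally compact $\Rightarrow$ $G$ locally Roelcke precompact'' is immediate: a compact neighbourhood of $e$ in $X$ meets $G$ in a neighbourhood of $e$ that, being a subset of a compact set, is Roelcke totally bounded. For the converse, take an open Roelcke totally bounded $V\ni e$; then $K:=\overline{V}^{X}$ is compact and a neighbourhood of $e$ in $X$, which gives local compactness of $X$ at $e$, hence — by translating $K$ — at every point of $G$. The work is in reaching the points of $X\setminus G$. I would fix a small open symmetric $U\ni e$ with $\overline{U}^{X}\subseteq\mathrm{int}_{X}(K)$ and show that the ``Roelcke balls'' $\overline{UgU}^{X}$ are compact for every $g\in G$; granting this, for arbitrary $x\in X$ density lets me choose $g\in G$ with $x$ lying in a half-size entourage-ball of $g$, and the triangle inequality for entourages places an entire neighbourhood of $x$ inside $\overline{UgU}^{X}$.

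\textbf{The geometric characterization.} ``Locally bounded and every coarsely bounded set Roelcke precompact $\Rightarrow$ locally Roelcke precompact'' is easy: local boundedness gives an open coarsely bounded neighbourhood of $e$, which by hypothesis is Roelcke precompact. Conversely, from an open Roelcke totally bounded $V\ni e$: every Roelcke ball $UFU$ lies in $(F_{0}U)^{3}$ with $F_{0}=F\cup F^{-1}\cup\{e\}$, so every Roelcke totally bounded set is coarsely bounded; in particular $V$ is a coarsely bounded neighbourhood of $e$, so $G$ is locally bounded. It remains to see that every coarsely bounded $A\subseteq G$ is Roelcke precompact. By Rosendal's description of coarsely bounded sets in a locally bounded group, $A\subseteq(FV)^{n}$ for some finite $F$ and some $n$; since $FV$ is a finite union of translates of $V$ it is Roelcke precompact, so the whole implication reduces to the assertion that in a locally Roelcke precompact group finite products of Roelcke precompact sets are again Roelcke precompact. (The compactness of the Roelcke balls $\overline{UgU}^{X}=\overline{U}^{X}\!\cdot\{g\}\cdot\overline{U}^{X}$ invoked in the previous case is the instance $U\cdot\{g\}\cdot U$ of the same assertion.)

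\textbf{The main obstacle.} That finite products of Roelcke precompact sets are Roelcke precompact is where the difficulty lies. \emph{One-sided} translates $gAh$ of a Roelcke precompact set are easily handled: given a target neighbourhood $Y$, apply total boundedness of $A$ to $g^{-1}Yg\cap hYh^{-1}$ and absorb the outer translations, using that conjugation is a homeomorphism. But a direct attack on a genuine product, already on $V\cdot V$, runs into a circularity: the finite set witnessing total boundedness of $V$ at a given scale depends on that scale, which is exactly what one must choose in terms of it. To break the circularity I would pass to $X$: since $\overline{V}^{X}$ is compact, it would suffice to extend the multiplication $V\times V\to G$ to a continuous map $\overline{V}^{X}\times\overline{V}^{X}\to X$, for then its compact image contains $V\cdot V$. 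This is subtle because the multiplication of $G$ need not extend to a continuous — even separately continuous — map on all of $X\times X$; this is exactly where the ``semi-topological semigroup'' issue of the abstract enters. One must therefore use the compactness of the factor $\overline{V}^{X}$ essentially — say through an Ellis--Namioka-type theorem on joint continuity of separately continuous maps on compact sets — or else confront the scale-dependence above by a careful compactness argument directly inside $G$. I expect this step to be the substantive difficulty; once the products (hence the Roelcke balls) are known to be Roelcke totally bounded, both characterizations fall out of the bookkeeping already sketched.
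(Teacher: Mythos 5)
You have correctly reduced everything to the single key lemma --- that in a locally Roelcke precompact group, products of Roelcke precompact sets are Roelcke precompact --- and your bookkeeping around it (both directions of the geometric characterization, the easy direction with $X$, and the reduction of local compactness of $X$ at points of $X\setminus G$ to compactness of the closures $\overline{UgU}$) is sound. But that lemma is exactly where you stop, and the route you sketch for it is the wrong one. Extending multiplication to a continuous (or even separately continuous) map $\overline{V}^{X}\times\overline{V}^{X}\to X$ is not available in general: the paper's Section \ref{sec semigroup operation} shows that multiplication extends separately continuously to the Roelcke completion of a locally Roelcke precompact group only under an additional weak almost periodicity hypothesis, so an Ellis--Namioka-type argument has no separately continuous map to start from. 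Leaning on that extension would make the main theorem depend on a property that locally Roelcke precompact groups need not have.

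The actual argument (Proposition \ref{R closed under products in LRPC}) is elementary and breaks the circularity you describe without ever leaving $G$. Fix an open Roelcke precompact $U\ni 1_{G}$. Given Roelcke precompact $A,B$ and a target neighbourhood $V$, choose $W$ with $W^{2}\subseteq V\cap U$ and finite $E_{A},E_{B}$ with $A\subseteq WE_{A}W$ and $B\subseteq WE_{B}W$. Then
\[ AB \subseteq (WE_{A}W)(WE_{B}W) = W\,(E_{A}W^{2}E_{B})\,W, \]
and the middle set $E_{A}W^{2}E_{B}=\bigcup_{f\in E_{A}}\bigcup_{g\in E_{B}} fW^{2}g$ is a \emph{finite union of two-sided translates} of $W^{2}\subseteq U$; by the closure of the Roelcke precompact ideal under subsets, translations, and finite unions (Lemma \ref{closure properties of R} --- no products needed) it is Roelcke precompact, so $E_{A}W^{2}E_{B}\subseteq WFW$ for some finite $F$, whence $AB\subseteq W^{2}FW^{2}\subseteq VFV$. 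The scale-dependence you worried about is absorbed because the translation/union closure properties hold at every scale simultaneously, with the fixed set $U$ supplying the Roelcke precompactness of $W^{2}$ at whatever scale $V$ demands. Note also that the paper's route to local compactness of $X$ differs slightly from yours: rather than covering $X\setminus G$ by closures of Roelcke balls, it shows that the extension of $d_{\wedge}$ (computed from a coarsely proper left-invariant metric) is a \emph{proper} metric on $X$, since every $d_{\wedge}$-bounded subset of $G$ is coarsely bounded, hence Roelcke precompact by the geometric characterization; this yields the stronger Heine--Borel statement in addition to local compactness.
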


Moreover, if $ X $ is obtained as the metric completion of a metric, $ d_{\wedge} $, compatible with the Roelcke uniformity and computed from a \emph{coarsely proper} left-invariant metric, then the extension of $ d_{\wedge} $ is a proper metric on $ X $.

The paper is outlined as follows: In Section \ref{sec background on uniform and coarse} we recall the necessary background material on the four canonical uniform structures on a topological group and on the coarse geometry of topological groups. In Section \ref{sec roelcke precompact sets} we introduce the main definitions of this paper, while Section \ref{sec examples} is devoted to examples of locally Roelcke precompact groups. Section \ref{sec ideal of roelcke precompact sets} considers the properties of the Roelcke precompact subsets of a group. Here it is seen that for locally Roelcke precompact groups, such sets are closed under taking products. This is the key fact needed for the material of Section \ref{sec characterizations}, where we prove the central result above. Finally, in Section \ref{sec semigroup operation} we consider when multiplication on $ G $ can be extended to a separately continuous operation on its Roelcke completion.

\section{Bases and metrics for the four canonical uniformities and coarse structures} \label{sec background on uniform and coarse}

Let us recall here some facts about the canonical uniformities on a topological group and of the coarse structures determined by the ideal of coarsely bounded sets. Most information in this section is found in \cite{roelcke1981uniform,uspenskij2001compactifications,rosendal2013global,ben2016weakly,nicas2012coarse,rosendal-book}. We also note some facts about bases and metrics for these coarse structures that are dual to the related results about uniformities, and are immediate, but not mentioned elsewhere. This will be used to describe the structure of the Roelcke completion of a locally Roelcke precompact Polish group in Section \ref{sec characterizations}.

\subsection{The four uniformities on a group determined by a compatible topology}
The left uniformity of a topological group is generated by the entourages $ \{(f,g) \in G^{2} \mid f \in gV \} $ as $ V $ varies over the neighborhoods of $ 1_{G} $ in $ G $. That these sets indeed form a basis for this uniformity follows from the compatibility of the topology with the group operations: that the identity neighborhoods are closed under inverses ($ V $ a neighborhood implies $ V^{-1} $ a neighborhood) and square roots ($ V $ a neighborhood implies $ W^{2} \subseteq V $ for some neighborhood $ W $). The right uniformity similarly has a basis of entourages of the form $ \{(f,g) \in G^{2} \mid f \in Vg \} $.

One uniform structure on a set is said to be \emph{finer} than those it contains---as a family of subsets of $ G^{2} $---and \emph{coarser} than those that contain it, and this partial order of containment determines a lattice structure. The join of the left and right uniformities, called the \emph{two-sided uniformity}, is generated by sets of the form $ \{(f,g) \in G^{2} \mid f \in (gV \cap Vg ) \} $. This is the coarsest uniform structure finer than both the left and right uniformities. The meet of the two (the finest uniformity coarser than both), called the \emph{Roelcke uniformity}, is generated by sets of the form $ \{(f,g) \in G^{2} \mid f \in VgV \} $. That in each case above the generating sets form bases for their respective uniformities follows again from the compatibility of the topology and the group operations. Moreover, the topology determined by each of the four uniformities agrees with the original topology on $ G $.

In the case where the topology of $ G $ is metrizable, each one of the above uniform structures has a countable base and is therefore also metrizable. Specifically, $ G $ has a left-invariant metric, $ d $, and all left-invariant metrics induce the left uniform structure. Fixing such a $ d $, compatible metrics for the right, two-sided, and Roelcke uniformities can be computed as $ d(f^{-1},g^{-1}) $, $ d(f,g) + d(f^{-1},g^{-1}) $, and $ \inf_{h \in G} \max \{d(f,h), d(h^{-1},g^{-1}) \} $, respectively. As the groups considered in what follows are all Polish, they are---in particular---metrizable, and so uniform notions (like completion or total boundedness) can be substituted with the corresponding metric notions if one prefers, and we will utilize this whenever it simplifies the exposition.

\subsection{The four coarse structures on a group determined by a compatible ideal}The situation for coarse structures on a group $ G $ is entirely dual to the above. Suppose a group, $ G $, is equipped with an ideal---viewed as the bounded subsets of the group---\emph{compatible} with the group operations in the sense that if $ A $ and $ B $ are bounded, then so are $ A^{-1} $ and the product, $ AB $. Then the entourages of the diagonal given by left translation $ \{(f,g) \in G^{2} \mid f \in gA \} $ generate a coarse structure on $ G $ whose bounded sets are the original ideal, and because the ideal and the group operations are compatible, these generating sets are a basis \cite{nicas2012coarse}.

The key insight of \cite{rosendal-book} is in identifying a canonical, compatible ideal of bounded sets possessed by every topological group.

\begin{defn*}[Rosendal \cite{rosendal-book}]
	A subset, $ A $, of a topological group, $ G $, is \emph{coarsely bounded} if it is assigned finite diameter by every continuous left-invariant pseudometric on $ G $.
\end{defn*}

These coarsely bounded sets and their associated coarse structure, the \emph{left-coarse structure}, both possess equivalent and natural intrinsic definitions, and have a nascent theory directly generalizing the geometric group theory of countable discrete (or more generally, locally compact) groups with coarse structure determined by the ideal of finite (respectively, compact) sets.

Akin to the situation for uniform structures, a coarse structure is the bounded coarse structure associated to some metric if and only if it has a countable basis. Therefore, the left-coarse structure on a group $ G $ is metrizable (and therefore by some left-invariant metric) if and only if there is a countable, cofinal family of coarsely bounded sets. In a Polish group, the Baire category theorem then implies that there is a coarsely bounded identity neighborhood, and hence a $ \mathbb{Z} $-chain of coarsely bounded open sets whose union is $ G $ and whose intersection is $ \{1_{G}\} $. Applying the metrization theorems to the corresponding chain of entourages results in a left-invariant metric, compatible with the topology of $ G $, whose bounded sets are precisely the coarsely bounded sets \cite[Theorem 2.28]{rosendal-book}. Such groups are called \emph{locally bounded}, and such a metric, compatible with both the left uniformity and the left-coarse structure, is called \emph{coarsely proper}.

Now the right-coarse structure can be defined analogously, generated by entourages of the form $ \{(f,g) \in G^{2} \mid f \in Ag \} $ and compatible with the right-invariant metric $ d(f^{-1},g^{-1}) $ if $ d $ is a left-invariant coarsely proper metric. Some attention is paid to this structure in \cite[Chapter 3 Section 4]{rosendal-book}, and particular to those groups, dual to the SIN groups, for which the left- and right-coarse structures coincide. As with the uniform structures, the coarse structures on a set also form a lattice under containment, where one is considered \emph{finer} than the another if it is contained in it \cite{dikranjan2017some}. Let us here use the convention that the lattice order is reverse containment. Then in direct analogy to the situation with the uniform structures, the join of the left- and right-coarse structure the \emph{coarsest} coarse structure \emph{finer} than both, while the meet is the \emph{finest} coarse structure \emph{coarser} than both. We collect here several facts about these coarse structures that are immediate, but have not appeared elsewhere.

\begin{prop}\label{meet and join of coarse}
	The join of the left- and right-coarse structures has as a basis the sets of the form $ \{(f,g) \in G^{2} \mid f \in (gA \cap Ag) \} $ as $ A $ varies over the coarsely bounded sets, while the meet has the sets $ \{(f,g) \in G^{2} \mid f \in AgA \} $ as a basis. The bounded sets in all four coarse structures coincide with the coarsely bounded sets. Moreover, if $ d_{L} $ is a compatible, left-invariant, coarsely proper metric on $ G $, then $ d_{\vee}(f,g) = d_{L}(f,g) + d_{L}(f^{-1},g^{-1}) $ induces the two-sided (join) uniformity and the join-coarse structure, while $ d_{\wedge}(f,g) = \inf_{h \in G}\max\{d_{L}(f,h),d_{L}(h^{-1},g^{-1})\} $ induces the Roelcke (meet) uniformity and the meet-coarse structure.
\end{prop}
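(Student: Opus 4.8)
The plan is to treat this as the coarse-geometric mirror of the classical facts about the four group uniformities recalled just above, carrying those arguments over almost verbatim. The only ingredients are: the coarsely bounded sets form an ideal $\mathcal{B}$ --- closed under subsets and finite unions, containing every singleton --- that is compatible with the group operations, i.e.\ $A^{-1}$ and $AB$ lie in $\mathcal{B}$ whenever $A, B$ do; and a coarsely proper left-invariant metric $d_{L}$ assigns finite diameter to precisely the coarsely bounded sets, so in particular every open ball $B_{r} = \{x \in G : d_{L}(1_{G},x) < r\}$ is coarsely bounded. That $d_{\vee}$ and $d_{\wedge}$ metrize the two-sided and the Roelcke uniformities was already recorded in the previous subsection, so only their effect on the coarse side needs proof.

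First I would confirm that $\{E^{\vee}_{A} := \{(f,g) \in G^{2} : f \in gA \cap Ag\} : A \in \mathcal{B}\}$ and $\{E^{\wedge}_{A} := \{(f,g) \in G^{2} : f \in AgA\} : A \in \mathcal{B}\}$ are bases for coarse structures: a short list of inclusions, with $E^{\vee}_{\{1_{G}\}} = E^{\wedge}_{\{1_{G}\}} = \Delta$, $(E^{\vee}_{A})^{-1} = E^{\vee}_{A^{-1}}$, $(E^{\wedge}_{A})^{-1} = E^{\wedge}_{A^{-1}}$, finite unions absorbed into $E^{\vee}_{A \cup B}$ and $E^{\wedge}_{A \cup B}$, and $E^{\vee}_{A} \circ E^{\vee}_{B} \subseteq E^{\vee}_{AB \cup BA}$, $E^{\wedge}_{A} \circ E^{\wedge}_{B} \subseteq E^{\wedge}_{AB \cup BA}$, each step invoking the stability of $\mathcal{B}$ under the relevant operation. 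Then I would identify the structures. Under the conventions fixed above the join of the left- and right-coarse structures $\mathcal{E}_{L}, \mathcal{E}_{R}$ is simply their set intersection $\mathcal{E}_{L} \cap \mathcal{E}_{R}$, which is again a coarse structure; a controlled set lies in it exactly when it is contained both in some $\{(f,g) : f \in gA\}$ and in some $\{(f,g) : f \in Bg\}$, hence in $E^{\vee}_{A \cup B}$, so $\{E^{\vee}_{A}\}$ is a basis. The meet is the least coarse structure containing $\mathcal{E}_{L} \cup \mathcal{E}_{R}$; I would show it is the structure generated by the $E^{\wedge}_{A}$, using on one side the inclusions $\{(f,g) : f \in gA\}, \{(f,g) : f \in Ag\} \subseteq E^{\wedge}_{A \cup \{1_{G}\}}$, and on the other the identity $E^{\wedge}_{A} = \{(f,g) : f \in Ag\} \circ \{(f,g) : f \in gA\}$, which exhibits $E^{\wedge}_{A}$ as a composition of a right- and a left-basic entourage and so places it inside every coarse structure that contains $\mathcal{E}_{L} \cup \mathcal{E}_{R}$ and is closed under composition.

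The assertion about bounded sets then costs little: bounded sets grow with the coarse structure, $\mathcal{E}_{L} \cap \mathcal{E}_{R} \subseteq \mathcal{E}_{L}, \mathcal{E}_{R} \subseteq$ (meet), and $\mathcal{E}_{L}, \mathcal{E}_{R}$ already have the coarsely bounded sets as bounded sets, so it suffices that $B \times B \subseteq E^{\wedge}_{A}$ with $A \in \mathcal{B}$ forces $B$ coarsely bounded --- and indeed $B \subseteq A g_{0} A \in \mathcal{B}$ for any $g_{0} \in B$. For the metrics, unwinding left-invariance gives $d_{L}(f,g) < r \iff f \in gB_{r}$ and $d_{L}(f^{-1},g^{-1}) < r \iff f \in B_{r}g$, so the open $r$-ball of $d_{\vee}$ lies inside $E^{\vee}_{B_{r}}$; conversely, if $f \in gA \cap Ag$ with $A \in \mathcal{B}$ then $d_{L}(f,g)$ and $d_{L}(f^{-1},g^{-1})$ are both at most the finite $d_{L}$-diameter of $A \cup A^{-1} \cup \{1_{G}\}$, so $E^{\vee}_{A}$ is $d_{\vee}$-bounded; this identifies the bounded coarse structure of $d_{\vee}$ with the join. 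The same computation, now choosing the witness $h = ag$ when $f = aga'$ with $a, a' \in A$, shows $d_{\wedge}$ induces the meet.

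The one genuine obstacle is the meet: the union $\mathcal{E}_{L} \cup \mathcal{E}_{R}$ is not itself a coarse structure, so one must pin down the structure it generates, and the efficient way to do so is the composition identity $E^{\wedge}_{A} = \{(f,g) : f \in Ag\} \circ \{(f,g) : f \in gA\}$ --- exactly dual to the fact that a $VgV$-entourage is a composition of a right- and a left-basic entourage for the uniformities. Everything else is bookkeeping with $\mathcal{B}$ and with the left-invariance of $d_{L}$; the point that most wants attention is keeping the lattice conventions straight, so that ``join'' really names $\mathcal{E}_{L} \cap \mathcal{E}_{R}$ and ``meet'' the structure generated by the $E^{\wedge}_{A}$.
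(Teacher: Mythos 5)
Your proposal is correct and follows essentially the same route as the paper: you verify that the sets $E_{A}^{\vee}$ and $E_{A}^{\wedge}$ satisfy the basis axioms via the compatibility of the ideal of coarsely bounded sets, identify the join with $\mathcal{E}_{L} \cap \mathcal{E}_{R}$ and the meet with the structure generated by compositions of left- and right-basic entourages, and pass between $d_{L}$-balls and coarsely bounded sets (with the same witness $h = ag$) for the metric assertions. The only cosmetic difference is that you handle the meet through the single identity $E_{A}^{\wedge} = E_{A}^{R} \circ E_{A}^{L}$ combined with the basis axioms, whereas the paper runs an explicit induction on alternating blocks $E_{A}^{L} \circ E_{B}^{R} \circ \cdots$, showing an $n$-block composition lies in $E_{A^{n} \cup B^{n}}^{\wedge}$.
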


%Convention: compositions here are left to right: (f,g) \in E \circ F \iff \exists h \in G (f,h) \in E and (h,g) \in F.

\begin{proof}
	Let $ \mathcal{E}_{L} $ and $ \mathcal{E}_{R} $ denote the left- and right-coarse structures, respectively, and for $ A \subseteq G $, let 
	\begin{align*}
		E_{A}^{L} &= \{(f,g) \in G^{2} \mid f \in gA \} \\
		E_{A}^{R} &= \{(f,g) \in G^{2} \mid f \in Ag \} \\
		E_{A}^{\vee} &= \{(f,g) \in G^{2} \mid f \in (gA \cap Ag) \} \\
		E_{A}^{\wedge} &= \{(f,g) \in G^{2} \mid f \in AgA \}.
	\end{align*}
	The join, $ \mathcal{E}_{\vee}, $ is $ \mathcal{E}_{L} \vee \mathcal{E}_{R} = \mathcal{E}_{L} \cap \mathcal{E}_{R} $ (see, e.g. \cite{dikranjan2017some}, recalling our convention of reversing the lattice order). Therefore $ E \in \mathcal{E}_{\vee} $ if and only if there are coarsely bounded $ A, B \subseteq G $ with $ E \subseteq E_{A}^{L} \cap E_{B}^{R} $, that is, for all $ (f,g) \in E $, $ f \in gA \cap Bg $. Then $ f \in g(A \cup B) \cap (A \cup B)g $, so $ (f,g) \in E_{A \cup B}^{\vee} $. Meanwhile, a quick calculation shows that $ \Delta_{G} = E_{1_{G}}^{\vee} $, $ E_{A}^{\vee} \cup E_{B}^{\vee} \subseteq E_{A \cup B}^{\vee} $, $ (E_{A}^{\vee})^{-1} = E_{A^{-1}}^{\vee} $, and $ E_{A}^{\vee} \circ E_{B}^{\vee} \subseteq E_{AB \cup BA}^{\vee} $. So the sets $ E_{A}^{\vee} $ form a basis for $ \mathcal{E}_{\vee} $ as $ A $ ranges over the coarsely bounded sets.
	
	The meet, $ \mathcal{E}_{\wedge} = \mathcal{E}_{L} \wedge \mathcal{E}_{R} $ is generated by sets of the form
	\[ E_{A}^{L} \circ E_{B}^{R} \circ E_{A}^{L} \circ E_{B}^{R} \circ \dots \circ E_{A}^{L} \circ E_{B}^{R}. \]
	First note that $ (f,g) \in E_{A}^{L} \circ E_{B}^{R} $ if and only if $ f \in hA $ and $ h \in Bg $ for some $ h \in G $, if and only if $ f \in BgA $. Therefore, if $ F = \{(f,g) \in G^2 \mid f \in B^{m}gA^{m} \} $, then $ (f,g) \in F \circ E_{A}^{L} \circ E_{B}^{R} $ if and only if there is $ h \in G $ so that $ f \in B^{m}hA^{m} $ and $ h \in BgA $ if and only if $ f \in B^{m+1}gA^{m+1} $. So by induction, a composition with $ n $ blocks of $ E_{A}^{L} \circ E_{B}^{R} $ is contained in $ E_{A^{n} \cup B^{n}}^{\wedge} $. Another calculation shows that $ \Delta_{G} = E_{1_{G}}^{\wedge} $, $ E_{A}^{\wedge} \cup E_{B}^{\wedge} \subseteq E_{A \cup B}^{\wedge} $, $ (E_{A}^{\wedge})^{-1} = E_{A^{-1}}^{\wedge} $, and $ E_{A}^{\wedge} \circ E_{B}^{\wedge} \subseteq E_{AB \cup BA}^{\wedge} $. So the sets $ E_{A}^{\wedge} $ form a basis for $ \mathcal{E}_{\vee} $.
	
	The sections above $ g \in G $ in the entourage corresponding to $ A $ in each of the four coarse structures are $ gA $, $ Ag $, $ (gA \cap Ag) $ and $ AgA $, and the coarsely bounded sets are an ideal closed under products, so the bounded sets in each coarse structure coincide with the coarsely bounded sets.
	
	If $ d_{L} $ is a compatible, left-invariant, coarsely proper metric, and $ A $ a coarsely bounded set, let $ r \in \mathbb{R} $ be such that $ A \subseteq B_{d_{L}}(1_{G},r) $. Then if $ (f,g) \in E_{A}^{\vee} $, then  $ f \in gA $, so $ d_{L}(f,g) < r $ and $ f \in Ag $, so $ d_{L}(f^{-1},g^{-1}) < r $. Therefore $ d_{\vee}(f,g) < 2r $. On the other hand $ d_{\vee}(f,g) < r $ implies $ (f,g) \in E_{B_{d_{L}}(1_{G},r)}^{\vee} $.
	Similarly, if $ (f,g) \in E_{A}^{\vee} $, there are $ a,b \in A $ so that $ f = agb $, Then $ f \in agA $ and $ ag \in Ag $, so
	\[ d_{\wedge}(f,g) = \inf_{h \in G}\max\{d_{L}(f,h),d_{L}(h^{-1},g^{-1})\} \leqslant \max\{d_{L}(f,ag),d_{L}((ag)^{-1},g^{-1})\} < r. \]
	And if $ d_{\wedge}(f,g) < r $ there is $ h \in G $ so that $ d_{L}(f,h) < r $ and $ d_{L}(h^{-1},g^{-1}) $, so then $ f \in h(B_{d_{L}}(1_{G},r)) $ and $ h \in (B_{d_{L}}(1_{G},r))g $, so $ f \in (B_{d_{L}}(1_{G},r))g(B_{d_{L}}(1_{G},r))  $.
	
	That the above metrics are compatible with the two-sided and Roelcke uniformities is well-known.
\end{proof}

Note that the above proof goes through just as well when the coarsely bounded sets are replaced by any other ideal that contains all the singletons and is stable under inverses and products. Or, in a closer analogy to the correspondence between group topologies and group uniformities, a single ideal of bounded sets can be replaced with an assignment $ g \mapsto \mathcal{A}_{g} $ of ideals to points. (This generalizes $ \infty $-metric spaces, where some points may be at infinite distance from each other.)

\section{Roelcke precompact sets} \label{sec roelcke precompact sets}
Here we consider another ideal in a Polish group, that of the Roelcke precompact sets. This ideal need not be compatible in the sense of the previous section, though we will see later that it will be in several important circumstances.

\begin{defn} \label{rpc def}
	A subset, $ A \subseteq G $, of a Polish group is \emph{Roelcke precompact} if for every neighborhood, $ V \subseteq G $, of the identity there is a finite set $ F \subseteq G $ so that $ A \subseteq VFV $.
\end{defn}

\begin{rem}
	Every Polish group has a countable basis at the identity of symmetric open neighborhoods. Therefore, the above definition is equivalent to one where the $ V $ ranges only over a family of neighborhoods containing such a basis.
\end{rem}

There is another seemingly stronger, but in fact equivalent, formulation of Definition \ref{rpc def}.

\begin{prop} \label{finite subset of the RPC set}
	The above Definition \ref{rpc def} is equivalent to one that requires $ F $ to be a subset of $ A $.
\end{prop}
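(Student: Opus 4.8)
The plan is to show that if $A$ is Roelcke precompact in the sense of Definition~\ref{rpc def}, then for every identity neighborhood $V$ we can find a finite $F \subseteq A$ with $A \subseteq VFV$; the converse is trivial. The natural strategy is a "shrinking" argument: given $V$, pick a symmetric open identity neighborhood $W$ with $W^{2} \subseteq V$ (using compatibility of the topology with the group operations). By hypothesis applied to $W$, there is a finite set $F_{0} = \{g_{1},\dots,g_{n}\} \subseteq G$ with $A \subseteq WF_{0}W = \bigcup_{i} Wg_{i}W$. Discard any index $i$ for which $Wg_{i}W \cap A = \emptyset$; for each surviving $i$, choose a witness $a_{i} \in Wg_{i}W \cap A$, and set $F = \{a_{i} : i \text{ surviving}\} \subseteq A$.

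The key step is then to verify $A \subseteq VFV$. Fix $a \in A$. Then $a \in Wg_{i}W$ for some surviving $i$, say $a = w g_{i} w'$ with $w,w' \in W$. Also $a_{i} \in Wg_{i}W$, say $a_{i} = u g_{i} u'$ with $u,u' \in W$. From the second equation, $g_{i} = u^{-1} a_{i} u'^{-1}$, and substituting into the first, $a = w u^{-1} a_{i} u'^{-1} w'$. Since $W$ is symmetric, $wu^{-1} \in W^{2} \subseteq V$ and $u'^{-1} w' \in W^{2} \subseteq V$, so $a \in V a_{i} V \subseteq VFV$. As $a \in A$ was arbitrary, $A \subseteq VFV$, and $F$ is a finite subset of $A$ as required. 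Since a countable symmetric basis at the identity suffices (per the Remark), one could equivalently run this only over a fixed such basis, but the argument above works verbatim for an arbitrary $V$.

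I do not anticipate a serious obstacle here: the only mild subtlety is remembering to pass to $W$ with $W^{2} \subseteq V$ before invoking the hypothesis, so that the two occurrences of $W$ flanking $g_{i}$ can be absorbed into $V$ on each side after cancelling $g_{i}$ against the witness $a_{i}$'s representation. The symmetry of $W$ is what makes $wu^{-1}$ and $u'^{-1}w'$ land in $W^{2}$; if one preferred not to assume symmetry, one would instead choose $W$ with $W W^{-1} \subseteq V$ and $W^{-1} W \subseteq V$, which exist by the same compatibility considerations. No properties of $A$ beyond Roelcke precompactness are used, and in particular the argument does not require the ambient group to be Polish — only that it is a topological group — though we state it in the Polish setting to match the paper's conventions.
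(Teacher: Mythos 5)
Your proposal is correct and follows essentially the same argument as the paper: shrink $V$ to a symmetric $W$ with $W^{2} \subseteq V$, prune the finite set to the pieces meeting $A$, replace each $g_{i}$ by a witness $a_{i} \in Wg_{i}W \cap A$, and absorb the resulting $W^{2}$ factors into $V$. No gaps.
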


\begin{proof}
	Suppose $ V $ is a neighborhood of $ 1_{G} $. Let $ W $ be a symmetric neighborhood of $ 1_{G} $ with $ W^{2} \subseteq V $ and $ F' \subseteq G $ with $ A \subseteq WF'W $. Let $ F'' = \{f \in F' \mid A \cap WfW \neq \emptyset \} $, and observe that $ A \subseteq WF''W $. For each $ f \in F'' $, choose an $ a_{f} \in A $ and $ v_{f}, w_{f} \in W $ so that $ a_{f} = v_{f}fw_{f} $. Then set $ F = \{a_{f} \in A \mid f \in F''\} $.
	
	Thus if $ b \in A $, there is an $ f \in F'' $ so that 
	\[ b \in WfW = W v_{f}^{-1} a_{f} w_{f}^{-1} W \subseteq W^{2} F W^{2} \subseteq VFV. \]
\end{proof}

Thus, Definition \ref{rpc def} says precisely that $ A $ is a precompact set in $ G $ with respect to the \emph{Roelcke uniformity}, i.e., that its closure in the completion of $ G $ is compact \cite[Proposition 9.4]{roelcke1981uniform}.

In a Polish group, $ G $, a subset $ A \subseteq G $ is coarsely bounded if and only if for every identity neighborhood, $ V $, there is a finite $ F \subseteq G $ and $ n \in \mathbb{N} $ so that $ A \subseteq (FV)^{n} $ \cite{rosendal-book}. Therefore every Roelcke precompact set is coarsely bounded. Recall that a group is \emph{locally bounded} when there is a coarsely bounded identity neighborhood, and \emph{coarsely bounded} (as a group) if every subset is coarsely bounded. So the following definitions describe classes of groups that may be considered as special cases of these:

\begin{defn} \label{lrpc def}
	A Polish group, $ G $, is \emph{locally Roelcke precompact} if there is an open Roelcke precompact subset $ U \subseteq G $.
\end{defn}

\begin{defn} \label{rpc group def}
	A Polish group, $ G $, is \emph{Roelcke precompact} if $ G $ is a Roelcke precompact subset of itself.
\end{defn}

\begin{note}
	Definition \ref{rpc group def} coincides with the established usage of the term. Take care that a subgroup, $ H < G $, may be a Roelcke precompact subset of $ G $ without being a Roelcke precompact group itself. On the other hand, a Roelcke precompact group is a Roelcke precompact subset of any group in which it is continuously embedded.
\end{note}

\section{Examples of locally Roelcke precompact Polish groups} \label{sec examples}
Clearly every Roelcke precompact group is locally Roelcke precompact. Moreover, every compact subset $ A \subseteq G $ is Roelcke precompact (cover $ A $ with the open sets $ VgV $ for $ g \in A $ and take a finite subcover), and so every locally compact group is locally Roelcke precompact.

We can also describe the countable homogeneous structures, $ \mathcal{M} $, for which $ \operatorname{Aut}(\mathcal{M}) $ is locally Roelcke precompact. Recall that in such a group, the stabilizers, $ \operatorname{Stab}(\bar{a}) $, are clopen subgroups and form a basis at the identity. If $ V = \operatorname{Stab}(\bar{a}) $, then for $ f,g \in \operatorname{Aut}(\mathcal{M}) $, $ f \in VgV $ if and only if $ \operatorname{tp}(\bar{a}, f\bar{a}) = \operatorname{tp} (\bar{a},g \bar{a}) $. This is because $ f \in VgV $ if and only if $ fvg^{-1} \in V $ for some $ v \in V $, that is, if some element of $ \operatorname{Aut}(\mathcal{M}) $ fixes $ \bar{a} $ and moves $ g \bar{a} $ to $ f \bar{a} $. The homogeneity of $ \mathcal{M} $ simply allows this to be translated into the language of types.

Thus $ \operatorname{Aut}(\mathcal{M}) $ is Roelcke precompact if and only if for every $ n $-type, $ p $, there are only finitely-many $ 2n $-types that project to $ p $ in the first and last $ n $ variables. From there we see two possibilities: in the first case $ \mathcal{M} $ has finitely-many $ 1 $-types, hence the above constrains it to have finitely many $ n $-types for all $ n $, $ \operatorname{Aut}(\mathcal{M}) $ acts oligomorphically, and the theory of $ \mathcal{M} $ is $ \omega $-categorical by the Engeler-Ryll--Nardzewski-Svenonius theorem. In the second case $ \mathcal{M} $ has infinitely many $ n $-types for each $ n $ by virtue of having infinitely many $ 1 $-types. The automorphism groups of such structures are the inverse limits of oligomorphic groups by \cite{tsankov2012unitary}, and are referred to as \emph{pro-oligomorphic} in \cite{ben2018eberlein}, who observe that they are still the automorphism groups of $ \omega $-categorical structures, but in a language with infinitely-many sorts.

Then the locally Roelcke precompact groups arise when this pheneomenon is localized to the stabilizer of a finite tuple. That is, for countable homogeneous $ \mathcal{M} $, $ \operatorname{Aut}(\mathcal{M}) $ is locally Roelcke precompact if and only if there is a finite subset $ B \subseteq M $ so that for all $ \bar{a} \in M^{n} $ there are $ \bar{c}_{1}, \dots, \bar{c}_{k} \models \operatorname{tp}(\bar{a}/B) $ so that for all $ \bar{d} \models \operatorname{tp}(\bar{a}/B) $,
\[ \operatorname{tp}(\overline{ad}/B) = \operatorname{tp}(\overline{ac_{i}}/B) \]
for some $ i \leqslant k $. Or equivalently, if the conditions in the previous paragraph apply to the expansion of $ \mathcal{M} $ by some finite tuple (though note that then by the Engeler-Ryll--Nardzewski-Svenonius theorem, if the resulting group is oligomorphic, then $ \operatorname{Aut}(\mathcal{M}) $ already was).

Much of this also transfers to the language of \emph{continuous} model theory, where counterparts to many statements from classical model theory hold, and for which the automorphism groups of separable structures are exactly the Polish groups. However, we next see that certain types of metric spaces (without any additional structure) already provide a wealth of examples of locally Roelcke precompact groups, and describes the Roelcke precompact subsets in such groups. The next definition should be viewed as analogous to the above condition on types.

\begin{defn} \label{pair-propinquitous}
	A metric space, $ (X,d) $ is \emph{pair-propinquitous} if for every finite metric space, $ A $, and every $ \varepsilon > 0 $ there is a $ \delta > 0 $ so that if $ i_{1}, i_{2}, j_{1}, j_{2} $ are isometric embeddings $ A \hookrightarrow X $ with
	\[ |d(i_{1}(a),i_{2}(b)) - d(j_{1}(a),j_{2}(b))| < \delta \text{ for every } a,b \in A, \]
	then there are isometric embeddings $ j_{1}', j_{2}' \colon A \hookrightarrow X $ satisfying, for all $ a,b \in A $,
	\begin{align*}
	&d(j_{1}(a),j_{2}(b)) = d(j_{1}'(a),j_{2}'(b)),\\
	&d(i_{1}(a),j_{1}'(a)) < \varepsilon, \text{ and}\\
	&d(i_{2}(a),j_{2}'(a)) < \varepsilon.
	\end{align*}
\end{defn}

The isometry group of the Urysohn sphere, $ \mathbb{U}_{1} $, was shown to be Roelcke precompact in \cite{uspenskij2008subgroups} and \cite{rosendal2009topological} and both proofs, despite their differing approaches, involve verifying that $ \mathbb{U}_{1} $ possesses a stronger property: for every $ n \in \mathbb{N} $ and every $ \varepsilon > 0 $ there is a $ \delta > 0 $ so that whenever any two enumerated $ n $-point subsets, $ b_{1}, \dots, b_{n} $ and $ c_{1}, \dots, c_{n} $ in $ \mathbb{U}_{1} $ agree on distances up to $ \delta $ (i.e., $ \max_{i,j \leqslant n} |d(b_{i},b_{j}) - d(c_{i},c_{j})| < \delta $) there is an isometric copy of one, $ c_{1}', \dots, c_{n}' $, pointwise within $ \varepsilon $ of the other ($ \max_{i \leqslant n} d(b_{i},c_{i}') < \varepsilon $). Maybe this property should be called \emph{propinquity}, but regardless, the definition here weakens it by requiring a particular $ \delta $ not to work for all metric spaces of a given size, but only those comprised of a pair of isometric copies of a fixed finite subset.

Recall that a metric space, $ (X,d) $, is \emph{ultrahomogeneous} if every isometry between finite subsets extends to an isometry of $ X $ and \emph{approximately ultrahomogeneous} if, for every $ \varepsilon > 0 $ and every isometry, $ f $, between finite subsets, there is a global isometry that agrees with $ f $ up to $ \varepsilon $. Clearly every ultrahomogeneous metric space is approximately ultrahomogeneous. If $ A \subseteq_{\mathrm{fin}} X $ and $ r > 0 $,
\[ V_{A,r} = \{\forall a \in A \; d(a,f(a)) < r \} \]
is a symmetric, open neighborhood of $ \mathrm{id}_{X} $ in $ \operatorname{Iso}(X) $, and such sets form an identity basis.

\begin{thm} \label{example schema for lrpc isometry groups}
	Suppose $ (X,d) $ is a separable, complete, pair-propinquitous, approximately ultrahomogeneous metric space. Then for every $ x \in X $ and $ r \in \mathbb{R}^{+} $, $ V_{x,r} $ is a Roelcke precompact subset of $ \operatorname{Iso}(X) $. In particular, $ \operatorname{Iso}(X) $ is locally Roelcke precompact.
\end{thm}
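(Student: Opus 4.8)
The plan is to verify Definition \ref{rpc def} for $V_{x,r}$ directly: given any basic identity neighborhood $V_{A,\varepsilon}$ (with $A \subseteq_{\mathrm{fin}} X$ and $\varepsilon > 0$), I will produce a finite $F \subseteq \operatorname{Iso}(X)$ with $V_{x,r} \subseteq V_{A,\varepsilon}\, F\, V_{A,\varepsilon}$. Replacing $A$ by $A \cup \{x\}$ and $\varepsilon$ by something smaller is harmless, since shrinking the neighborhood only strengthens the conclusion, so I may assume $x \in A$.

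The organizing idea is that the ``Roelcke type at scale $(A,\varepsilon)$'' of an isometry $f$ is essentially determined by the finite array of distances $\Delta_f = \bigl(d(a,f(b))\bigr)_{a,b \in A}$, and that as $f$ ranges over $V_{x,r}$ these arrays range over a bounded, hence totally bounded, subset of $\mathbb{R}^{A \times A}$: with $x \in A$ and $d(x,f(x)) < r$, the triangle inequality gives $d(a,f(b)) < 2\operatorname{diam}(A) + r =: R$ for all $a,b \in A$. So first I would apply pair-propinquity to the finite metric space $A$ and to $\varepsilon' := \varepsilon/2$ to obtain a $\delta > 0$; then cover $[0,R]^{A\times A}$ by finitely many sets of $\ell_{\infty}$-diameter below $\delta$; then, for each such piece that is hit, choose a representative $g_{k} \in V_{x,r}$ with $\Delta_{g_{k}}$ in that piece, and set $F = \{g_{k}\}$. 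Everything then reduces to the following claim: if $f, g \in V_{x,r}$ satisfy $|d(a,f(b)) - d(a,g(b))| < \delta$ for all $a,b \in A$, then $f \in V_{A,\varepsilon}\, g\, V_{A,\varepsilon}$.

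For that claim — the crux — I would feed into the definition of pair-propinquity the embeddings $i_{1} = j_{1} = \iota_{A}$ (the inclusion), $i_{2} = f|_{A}$, and $j_{2} = g|_{A}$; the hypothesis of pair-propinquity is exactly the $\delta$-closeness above, so one gets isometric embeddings $j_{1}', j_{2}' \colon A \hookrightarrow X$ with $d(j_{1}'(a), j_{2}'(b)) = d(a, g(b))$ for all $a,b$, and with $d(a, j_{1}'(a)) < \varepsilon'$ and $d(f(a), j_{2}'(a)) < \varepsilon'$ for all $a$. The exact distance identity makes the assignment $j_{1}'(a) \mapsto a$, $j_{2}'(b) \mapsto g(b)$ a well-defined isometry $\phi$ between the finite sets $j_{1}'(A) \cup j_{2}'(A)$ and $A \cup g(A)$ — the only thing to check being consistency when two source points coincide, which holds because then the relevant $d$-values vanish on both sides. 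By approximate ultrahomogeneity there is a global isometry $w$ with $d(w(y), \phi(y)) < \varepsilon'$ on $\operatorname{dom}\phi$; two triangle-inequality estimates then give $d(w(a),a) < \varepsilon$, so $w \in V_{A,\varepsilon}$, and $d(w(f(a)), g(a)) < \varepsilon$ for all $a \in A$. Setting $u := w^{-1} \in V_{A,\varepsilon}$ and $v := g^{-1} w f \in V_{A,\varepsilon}$ (using that these neighborhoods are symmetric), we get $f = u g v$, proving the claim; summing over the finitely many pieces yields $V_{x,r} \subseteq V_{A,\varepsilon}\, F\, V_{A,\varepsilon}$, and since $V_{x,r}$ is an open neighborhood of $\operatorname{id}_{X}$, $\operatorname{Iso}(X)$ is locally Roelcke precompact. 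I expect the gluing in this last paragraph to be the main obstacle: it is precisely where the two hypotheses must interlock — pair-propinquity supplying an approximate isometric realization of $g$'s configuration sitting near $f$'s, and approximate ultrahomogeneity promoting that realization to an actual group element — and care is needed that the $\varepsilon$-budget splits consistently and that degenerate coincidences among the auxiliary points cause no harm.
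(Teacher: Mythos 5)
Your proposal is correct and follows essentially the same route as the paper: discretize the matrix of distances $d(a,f(b))$ over the bounded range $[0, r+2\operatorname{diam}(A)]$ into pieces of width below the $\delta$ from pair-propinquity, pick one representative isometry per inhabited piece, and then combine pair-propinquity with approximate ultrahomogeneity (each contributing $\varepsilon/2$) to conjugate any $f \in V_{x,r}$ into $V_{A,\varepsilon}\, g_k\, V_{A,\varepsilon}$ for the matching representative $g_k$. The only cosmetic difference is that you realize the representative's configuration near $f$'s while the paper realizes it near the generic element $g$'s; the factorization $f = w^{-1} g_k (g_k^{-1} w f)$ is the same as the paper's $g \in u_g f_{s_g} W$.
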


\begin{proof}
	Let $ W $ be an open neighborhood of $ \operatorname{Iso}(X) $. By shrinking $ W $, we may assume that $ W = V_{\{y_{0},\dots,y_{n}\},\varepsilon} $, where $ y_{0} = x $ and $ \varepsilon \leqslant r $. Let $ \delta $ be the value given by pair-propinquity with respect to the finite metric space $ A=\{y_{0},\dots,y_{n}\} $ and the value $ \frac{\varepsilon}{2} $.
	
	Observe that for any isometry $ h \in V_{x,r} $ and any $ i,j \leqslant n $,
	\[  d(y_{k},h(y_{l})) \leqslant d(y_{k},x) + d(x,h(x)) + d(h(x),h(y_{l})) < r + 2 \operatorname{diam}(A). \]
	Let $ \mathcal{P} $ be a partition of the interval $ [0,r + 2 \operatorname{diam}(A)] $ into finitely-many subintervals of length less than $ \delta $. For each $ s \colon \{0,\dots,n\}^{2} \to \mathcal{P} $, fix $ f_{s} \in \operatorname{Iso}(X) $ satisfying $ d(y_{k},f_{s}(y_{l})) \in s(k,l) $ for all $ k,l \leqslant n $, if one exists, and let $ F $ be the set of these isometries. Note that $ F \neq \emptyset $, as $ \operatorname{id}_{X} $ satsfies the conditions of $ f_{s_{\mathrm{id}}} $ for $ s_{\mathrm{id}} \colon \{0,\dots,n\}^{2} \to \mathcal{P} $, where $ s_{\mathrm{id}}(k,l) $ is the unique interval containing $ d(y_{k},y_{l}) $.
	
	Then for any $ g \in V_{x,r} $, if $ s_{g}\colon \{0,\dots,n\}^{2} \to \mathcal{P} $ satisfies $ d(y_{k},g(y_{l})) \in s_{g}(k,l) $ for all $ k,l \leqslant n $, then the values $ d(y_{k},g(y_{l})) $ and $ d(y_{k},f_{s_{g}}(y_{l})) $ lie in the same piece, $ s_{g}(k,l) $, of the partition, $ \mathcal{P} $, and so  $ \left| d(y_{k},g(y_{l})) - d(y_{k},f_{s_{g}}(y_{l})) \right| < \delta $. Thus by pair-propinquity there is an isometric copy of $ A \cup f_{s_{g}}[A] $ located point-by-point within $ \frac{\varepsilon}{2} $ of $ A \cup g[A] $ and by approximate ultrahomogeneity, there is a global isometry, $ u_{g} $, that agrees with this partial isometry on $ A \cup f_{s_{g}}[A] $ with error at most $ \frac{\varepsilon}{2} $. Thus, for all $ k \leqslant n $, $ d(g(y_{k}),u_{g}f_{s_{g}}(y_{k})) < \frac{\varepsilon}{2} + \frac{\varepsilon}{2} = \varepsilon $ and so $ d(f_{s_{g}}^{-1}u_{g}^{-1}g(y_{k}),y_{k}) = d(g(y_{k}),u_{g}f_{s_{g}}(y_{k})) < \varepsilon $ and $ f_{s_{g}}^{-1}u_{g}^{-1}g \in W $.
	
	Moreover, for all $ k \leqslant n $, $ d(y_{k},u_{g}(y_{k})) < \frac{\varepsilon}{2} $, and so $ u_{g} \in V_{A,\frac{\varepsilon}{2}} \subseteq W $. So as $ f_{s_{g}}^{-1}u_{g}^{-1}g \in W $, $ g \in u_{g}f_{s_{g}}W \subseteq Wf_{s_{g}}W \subseteq WFW $. So $ V_{x,r} \subseteq WFW $ and as $ W $ was arbitrary, $ V_{x,r} $ is a Roelcke precompact subset of $ \operatorname{Iso}(X) $.
\end{proof}

Note that if an ultrahomogeneous $ (X,d) $ has finite diameter, then $ \operatorname{Iso}(X) = V_{x,r} $ for any $ x $ and sufficiently large $ r $, and so if the other conditions of Theorem \ref{example schema for lrpc isometry groups} are met, then $ \operatorname{Iso}(X) $ is Roelcke precompact. Many well-known examples of Roelcke precompact groups can then be seen as instances of this theorem, for example $ \operatorname{Iso}(\mathbb{U}_{1}) $ as already mentioned, as well as $ S_{\infty} $ (\cite{roelcke1981uniform}, viewed as the isometry group of a countable set with the discrete metric) $ \operatorname{Aut}(\mathbf{R}) $ (the random graph, as a metric space with the graph metric), and $ U(\ell_{2}) $ (\cite{uspenskij1998roelcke}, viewed as the isometry group of the unit sphere of $ \ell_{2} $).

This also furnishes a number of examples of locally Roelcke precompact groups that are not Roelcke precompact. For example, the isometry group, $ \operatorname{Iso}(\mathbb{U}) $ of the Urysohn space, the (affine) isometry groups of $ \ell_{2} $ and the Gurarij space, and $ \operatorname{Aut}(\mathcal{T}_{\infty}) $, the automorphism group of the (unrooted) countably-regular tree. For this last example, note that $ \mathcal{T}_{\infty} $ is automatically pair-propinquitous by virtue of having a uniformly discrete set of possible distances. So the same applies to the automorphism group of any \emph{metrically homogeneous} graph---a graph that is ultrahomogeneous when viewed as a metric space with the graph metric.

As most familar examples of ultrahomogeneous metric spaces are enumerated in the examples above, it is worth observing that pair-propinquity is not a redundant condition. As the next example shows, there are ultrahomogeneous metric spaces lacking this property, and whose isometry groups are indeed not locally Roelcke precompact.

\begin{exmp}
	Let $ S \subseteq \mathbb{R} $ be $ S = \mathbb{N} \cup \{1 + \frac{1}{n} \mid n \geqslant 3 \} $. Observe that $ 3 $ points with an assignment of distances from $ S $ satisfy the triangle inequality if and only if the same is true when the values of the form $ 1 + \frac{1}{n} $ are replaced with value $ 1 $. Then as the set $ \mathbb{N} $ satisfies the \emph{4-values condition} of \cite{delhomme2007divisibility}, so does $ S $, and so by \cite{sauer2013distance} there is a countable ultrahomogeneous metric space, $ \mathbb{U}_{S} $, for which $ S = \{d(x,y) \mid x,y \in U_{S}\} $ and containing a copy of every finite metric space whose distances come from $ S $. The space $ \mathbb{U}_{S} $ fails to have pair-propinquity as witnessed by $ A = \{x\} $ (any singleton) and $ \varepsilon = \frac{1}{2} $. For any $ \delta > 0 $, let $ n > \frac{1}{\delta} $ and by universality find embeddings with $ d(i_{1}(x), i_{2}(x))=1 $ and $ d(j_{1}(x), j_{2}(x))=1 + \frac{1}{n} $. For any $ y,z \in \mathbb{U}_{S} $, $ d(y,z) < \frac{1}{2} $ implies $ y = z $, so there can be no $ j_{1}' $ and $ j_{2}' $ satisfying the conditions of Definition \ref{pair-propinquitous}.
	
	Let $ G = \operatorname{Iso}(\mathbb{U}_{S}) $; then $ G $ is not locally Roelcke precompact. For if $ U \subseteq G $ is an identity neighborhood, then $ U $ contains an open neighborhood of the form $ V_{A,\varepsilon} $, and so contains the stabilizer, $ V_{A} $, of $ A $. Fix $ K \in \mathbb{N} $ sufficiently large (at least twice $ \operatorname{diam}(A) $) and let $ y \in \mathbb{U}_{S} $ be a point for which $ d(a,y) = K $ for all $ a \in A $. Now consider the open neighborhood $ W = V_{y,\frac{1}{2}} $  ($ = \operatorname{Stab(y)} $, as $ S $ contains no positive values less than $ 1 $). Observe that if $ g \in WfW $, there are $ u,v \in W $ with $ g = ufv $ and so $ d(y,g(y)) = d(y,ufv(y)) = d(u^{-1}(y), fv(y)) = d(y,f(y)) $, as $ u^{-1} $ and $ v $ are isometries fixing $ y $. For any finite $ F \subseteq G $, there is an $ M \geq 3 $ so that $ d(y,f(y)) \neq 1 + \frac{1}{M} $ for any $ f \in F $. By universality and ultrahomogeneity of $ \mathbb{U}_{S} $, there is a $ z $ so that $ d(z,a) = K $ for all $ a \in A $ and $ d(y,z) = 1 + \frac{1}{M} $. Then by ultrahomogeneity again there is a $ g \in G $ fixing $ A $ and sending $ y \mapsto z $. So as $ g \in U $ since it fixes $ A $, but $ d(y,g(y)) = d(y,z) = 1 + \frac{1}{M}  $, so $ g \notin WFW $. So $ U \nsubseteq WFW $, and since $ F $ was an arbitrary finite subset of $ G $, $ U $ is not Roelcke precompact.
\end{exmp}

\section{The ideal of Roelcke precompact sets} \label{sec ideal of roelcke precompact sets}

\begin{lem} \label{closure properties of R}
	The family of Roelcke precompact subsets of $ G $ is closed under taking subsets, inverses, finite unions, and left and right translations.
\end{lem}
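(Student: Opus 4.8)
The plan is to check the four closure properties directly against Definition \ref{rpc def}, using the observation (from the Remark following that definition) that it suffices to verify the condition for $V$ ranging over a basis of symmetric open identity neighborhoods. Closure under subsets is immediate, since $B \subseteq A \subseteq VFV$ gives $B \subseteq VFV$ with the same $F$. Closure under finite unions reduces by induction to two sets: if $A_1 \subseteq VF_1V$ and $A_2 \subseteq VF_2V$ with $F_1,F_2$ finite, then $A_1 \cup A_2 \subseteq V(F_1 \cup F_2)V$. Closure under inverses is where restricting to a symmetric basis pays off: given symmetric open $V$, pick finite $F$ with $A \subseteq VFV$; then $A^{-1} \subseteq V^{-1}F^{-1}V^{-1} = VF^{-1}V$, and $F^{-1}$ is finite.

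The translation cases take a moment of care, and this is the one place the argument is not purely formal, because the two-sided expression $VFV$ is not literally translation-invariant. Fix $g \in G$ and a symmetric open $V \ni 1_G$. Since conjugation by $g$ is a homeomorphism fixing the identity, I can choose a symmetric open $W \ni 1_G$ with $W \subseteq V$ and $gWg^{-1} \subseteq V$; applying Roelcke precompactness of $A$ to $W$ produces a finite $F'$ with $A \subseteq WF'W$. Then $gA \subseteq gWF'W = (gWg^{-1})(gF')W \subseteq V(gF')V$, so $gF'$ witnesses that $gA$ is Roelcke precompact. For right translation one instead chooses symmetric open $W$ with $W \subseteq V$ and $g^{-1}Wg \subseteq V$ and uses $Ag \subseteq WF'Wg = W(F'g)(g^{-1}Wg) \subseteq V(F'g)V$.

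I do not anticipate a genuine obstacle beyond this bookkeeping. If one prefers a structural argument, one can instead invoke the observation after Definition \ref{rpc def} that Roelcke precompactness coincides with precompactness in the Roelcke uniformity, note that left and right translations and inversion are each uniformly continuous for that uniformity (the neighborhood $W$ selected above is precisely what is needed to check uniform continuity of $L_g$, and $R_g$ is conjugate to $L_{g^{-1}}$ by the inversion, which is itself a uniform isomorphism), and conclude since a uniformly continuous image of a precompact set is precompact. Either route reduces to the same point: the non-invariance of the form $VFV$, repaired by the conjugation trick.
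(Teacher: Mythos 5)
Your proof is correct and follows essentially the same route as the paper: direct verification from Definition \ref{rpc def}, with the conjugation trick (shrinking $V$ to a $W$ whose conjugate by $g$ lands back in $V$) handling the translation cases exactly as the paper does via $W = V \cap g^{-1}Vg \cap gVg^{-1}$. The only cosmetic difference is that for inverses the paper applies the definition to the neighborhood $V^{-1}$ rather than restricting to symmetric $V$, and your closing remark reducing everything to uniform continuity of translations and inversion for the Roelcke uniformity is a valid alternative the paper does not spell out.
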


\begin{proof}
	Suppose $ A $ and $ B $ are Roelcke precompact subsets of $ G $ and that $ V $ is a neighborhood of the identity.
	\begin{itemize}
		\item If $ C \subseteq A $, then there is an $ F \subseteq_{\mathrm{fin}} G $ so that $ A \subseteq VFV $, and consequently $ C \subseteq VFV $.
		\item There is a finite $ F $ with $ A \subseteq V^{-1} F V^{-1} $, and so $ A^{-1} \subseteq V F^{-1} V $.
		\item Let $ F_{A} $ and $ F_{B} $ be finite sets with $ A \subseteq VF_{A}V $ and $ B \subseteq V F_{B} V $. Then $ A \cup B \subseteq V (F_{A} \cup F_{B}) V $.
		\item Suppose $ g \in G $ and $ W = V \cap g^{-1} V g \cap g V g^{-1} $. If $ F \subseteq_{\mathrm{fin}} G $ is such that $ A \subseteq WFW $, then
		\[ gA \subseteq gWFW \subseteq g g^{-1} V g F W = V (gF) W \subseteq V (gF) V \]
		and
		\[ Ag \subseteq WFWg \subseteq WFgVg^{-1}g = W(Fg)V \subseteq V(Fg)V. \]
	\end{itemize}
\end{proof}

For $ A,B \subseteq G $ let $ AB =\{gh \in G \mid g \in A \text{ and } h \in B \} $. In the next example, we observe that $ AB $ needn't be Roelcke precompact even when both $ A $ and $ B $ are.

\begin{exmp} \label{not stable under products exmp}
	Let $ G = \mathbb{Z}^{\mathbb{N}} \rtimes S_{\infty} $ be the semidirect product given by the action of $ S_{\infty} $ that permutes the coordinates of elements of $ \mathbb{Z}^{\mathbb{N}} $. As $ S_{\infty} $ is a Roelcke precompact \emph{group} (in the sense of Definition \ref{rpc group def}) \cite[Example 9.4]{roelcke1981uniform}, the subgroup $ A = \{1_{\mathbb{Z}^{\mathbb{N}}}\} \times S_{\infty} $ is a Roelcke precompact \emph{subset} of $ G $ (in the sense of Definition \ref{rpc def}). Letting $ g = ((1,2,3,\dots),1_{S_{\infty}}) $, the coset $ gA $ is also Roelcke precompact by Lemma \ref{closure properties of R}.
	
	However, $ A(gA) $ is not: note that for every $ m \in \mathbb{N} $, it contains an element of the form $ (x,1_{S_{\infty}}) $ where $ x_{1} = m $, specifically $ h_{m}gh_{m} $ where $ h_{m} = (1_{\mathbb{Z}^{\mathbb{N}}},(1 \quad m)) $ is the pair whose first coordinate is $ 1_{\mathbb{Z}^{\mathbb{N}}} $ and whose second coordinate is the permutation that exchanges $ 1 $ and $ m $. But if $ V = \{(x,\sigma) \in G \mid x_{1}=0 \text{ and } \sigma(1) = 1 \} $, then $ AgA \nsubseteq VFV $ for any finite $ F $, as
	 \[ \{m \in \mathbb{N} \mid \exists (y, \tau) \in VFV \; y_{1} = m  \} = \{m \in \mathbb{N} \mid \exists (y, \tau) \in F \; y_{1} = m  \}. \]
\end{exmp}

An interesting feature of the group in the above example is that it \emph{is} coarsely bounded. In fact, for every identity neighborhood, $ V $, there is an $ f \in G $ so that $ G = VfVf^{-1}V $. To see this note that the group $ G $ can be viewed as the automorphism group $ G = \operatorname{Aut}(\mathcal{M}) $, where $ \mathcal{M} = (\bigsqcup_{n \in \mathbb{N}} \mathbb{Z}, S) $ consists of countably-many copies of $ \mathbb{Z} $ equipped with a function for successor. Then $ \operatorname{Th}(\mathcal{M}) $ is $ \omega $-stable and $ \mathcal{M} $ is saturated, implying that $ G $ is coarsely bounded, but moreover the relation on finite subsets, where $ A \ind B $ holds when no element of $ A $ is in the same copy of $ \mathbb{Z} $ as an element of $ B $, is an \emph{orbital independence relation}, and so the coarse boundedness of $ G $ takes the above form \cite[Chapter 6]{rosendal-book}. In particular, $ G $ is locally bounded, and the ideals of Roelcke precompact sets and of coarsely bounded sets do not coincide.

However, in some circumstances, the ideal of Roelcke precompact sets will be stable under products.

\begin{prop} \label{R closed under products in Weil complete}
	If $ G $ is Weil complete, the product of two Roelcke precompact subsets of $ G $ is Roelcke precompact.
\end{prop}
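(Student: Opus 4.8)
The plan is to deduce the statement from the fact that a Weil complete group is large-scale rigid, in the sense that every Roelcke precompact subset is already relatively compact. Granting this, the Proposition is immediate: if $ A, B \subseteq G $ are Roelcke precompact, then $ \overline{A} $ and $ \overline{B} $ are compact, so $ \overline{A}\,\overline{B} $ is the continuous image of the compact set $ \overline{A} \times \overline{B} $, hence compact, hence closed; therefore $ \overline{AB} \subseteq \overline{A}\,\overline{B} $ is a closed subset of a compact set, so $ AB $ is relatively compact, and in particular Roelcke precompact, since every compact subset of a topological group is Roelcke precompact (Section \ref{sec examples}).

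So the real content is the claim that in a Weil complete Polish group a Roelcke precompact set $ A $ has compact closure, and this is the step I expect to be the main obstacle. Since $ G $ is Polish it is Raikov complete, so $ \overline{A} $, being closed, is complete in the two-sided uniformity; hence it suffices to show that $ A $ is totally bounded in the two-sided uniformity. Now $ A $ is totally bounded in the \emph{Roelcke} uniformity, and the task is to upgrade this to total boundedness in the two-sided uniformity — this is precisely where Weil completeness enters, and it genuinely fails without it (for instance $ S_\infty $ is Roelcke precompact but not compact, and not Weil complete). Starting from $ A \subseteq V F V $ with $ F \subseteq A $ finite (Proposition \ref{finite subset of the RPC set}), one wants to replace the Roelcke ``sandwich'' by a small perturbation of a finite set, which requires controlling the conjugates $ f^{-1}Vf $ for $ f $ in the finite approximating set; I expect this is done by using convergence of one-sided Cauchy filters to see that the approximating sets can be taken inside a fixed compact set, after which the conjugation map $ (g,x) \mapsto g^{-1}xg $, being continuous and trivial on $ (\text{that compact set}) \times \{1_G\} $, is equicontinuous in $ x $ uniformly over the compact set.

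Once compactness of (say) $ \overline{B} $ is available, one can also run the estimate directly, which is perhaps the cleanest presentation. Fix an identity neighborhood $ U $, a symmetric open $ U_0 $ with $ U_0^2 \subseteq U $, and — using continuity of conjugation together with compactness of $ \overline{B} $ — a symmetric open $ W \subseteq U_0 $ with $ \bigcup_{b \in \overline{B}} b^{-1} W^2 b \subseteq U_0 $. Choosing finite $ F_A \subseteq A $ and $ F_B \subseteq B $ with $ A \subseteq W F_A W $ and $ B \subseteq W F_B W $ (Proposition \ref{finite subset of the RPC set}), one gets
\[ AB \subseteq W F_A W^2 F_B W \subseteq W F_A F_B \left( \bigcup_{f \in F_B} f^{-1} W^2 f \right) W \subseteq W F_A F_B\, U_0\, W \subseteq U (F_A F_B) U, \]
using $ W^2 f = f(f^{-1}W^2 f) $ for the second inclusion, $ F_B \subseteq \overline{B} $ for the third, and $ W \subseteq U_0 \subseteq U $ together with $ U_0 W \subseteq U_0^2 \subseteq U $ for the last. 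As $ F_A F_B $ is finite and $ U $ arbitrary, $ AB $ is Roelcke precompact.
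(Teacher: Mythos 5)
Your first paragraph is exactly the paper's argument: reduce to the claim that in a Weil complete group every Roelcke precompact set has compact closure, then use that a product of two compact sets is compact. The problem is the claim itself, which you correctly identify as the main obstacle but do not actually prove. The paper disposes of it by citing \cite[Theorem 11.4]{roelcke1981uniform}: a group complete in the left uniformity is also complete in the Roelcke uniformity, so a Roelcke-totally-bounded set has compact closure immediately (the Roelcke topology is the group topology, so no passage through the two-sided uniformity is needed). Your substitute route --- upgrade Roelcke total boundedness to two-sided total boundedness and invoke Raikov completeness --- is not carried out, and the sketch you give is circular: ``using convergence of one-sided Cauchy filters to see that the approximating sets can be taken inside a fixed compact set'' presupposes a compact set containing the approximating data, which is essentially the relative compactness you are trying to establish. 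Concretely, from $ A \subseteq WFW $ one gets $ A \subseteq \bigcup_{f \in F} W(fWf^{-1})f $, so one would need $ W \cdot fWf^{-1} \subseteq V $ for all $ f \in F $; but $ F $ depends on $ W $, so $ W $ cannot be shrunk after the fact, and no uniform control of the conjugates is available without some explicit use of Weil completeness. This is not a removable difficulty: $ S_{\infty} $ is Roelcke precompact but not compact, so Roelcke total boundedness genuinely fails to imply two-sided total boundedness in general, and nothing in your sketch shows where that implication would actually use the hypothesis.

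Your closing displayed estimate is correct as written (the choice of $ W $ with $ \bigcup_{b \in \overline{B}} b^{-1}W^{2}b \subseteq U_{0} $ is available by a standard compactness argument), but it only runs once compactness of $ \overline{B} $ is already in hand, so it does not repair the gap --- and at that point the first paragraph already finishes the proof. The fix is simply to justify the compactness claim by the Roelcke--Dierolf theorem (or to give a genuine proof of it), after which your argument coincides with the paper's.
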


\begin{proof}
	By \cite[Theorem 11.4]{roelcke1981uniform}, if $ G $ is Weil complete (i.e., complete in the left uniformity) then it is also complete in the Roelcke uniformity. Thus every Roelcke precompact subset is, in fact, compact and the product of two compact subsets is a compact subset.
\end{proof}

\begin{prop} \label{R closed under products in LRPC}
	If $ G $ is locally Roelcke precompact, the product of two Roelcke precompact subsets of $ G $ is Roelcke precompact.
\end{prop}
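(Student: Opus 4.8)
The plan is to argue combinatorially inside $G$, using only the closure properties of the ideal of Roelcke precompact sets recorded in Lemma \ref{closure properties of R} together with the fact that the identity of a locally Roelcke precompact group has a Roelcke precompact \emph{open} neighborhood. First I would normalize the hypothesis: from an open Roelcke precompact $U \subseteq G$ one obtains, by translating and invoking Lemma \ref{closure properties of R}, an open \emph{neighborhood} $V_0$ of $1_G$ that is Roelcke precompact; this is the only place local Roelcke precompactness enters.

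Now fix Roelcke precompact $A, B \subseteq G$ and an arbitrary identity neighborhood $V$; I want a finite $F$ with $AB \subseteq VFV$. Choose an open identity neighborhood $W$ with $W^{2} \subseteq V \cap V_0$, and apply Definition \ref{rpc def} to $A$ and to $B$, with the neighborhood $W$, to get finite sets $F_{A}, F_{B}$ with $A \subseteq WF_{A}W$ and $B \subseteq WF_{B}W$. Then
\[ AB \;\subseteq\; (WF_{A}W)(WF_{B}W) \;=\; WF_{A}W^{2}F_{B}W. \]
Since $W^{2} \subseteq V_0$, the set $W^{2}$ is Roelcke precompact (Lemma \ref{closure properties of R}), hence so is $F_{A}W^{2}F_{B} = \bigcup_{a \in F_{A},\, b \in F_{B}} aW^{2}b$, being a finite union of left- and right-translates of a Roelcke precompact set. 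Applying Definition \ref{rpc def} once more, this time to $F_{A}W^{2}F_{B}$ with the neighborhood $W$, yields a finite $E$ with $F_{A}W^{2}F_{B} \subseteq WEW$, so that
\[ AB \;\subseteq\; W\bigl(F_{A}W^{2}F_{B}\bigr)W \;\subseteq\; W(WEW)W \;=\; W^{2}EW^{2} \;\subseteq\; VEV. \]
As $V$ was arbitrary, $AB$ is Roelcke precompact.

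The step I expect to be the sticking point is the appearance of the ``doubled'' factor $W^{2}$ in the middle after concatenating the two double-coset coverings of $A$ and $B$: the naive attempts to absorb it --- conjugating $W^{2}$ past the finite sets $F_{A}, F_{B}$, or passing to the Roelcke completion and trying to use its local compactness --- seem to force a circular choice of parameters (the finite sets depend on $W$, but controlling the conjugates would require $W$ chosen small relative to those finite sets). The resolution I would aim for is to avoid controlling anything: $W^{2}$ is not merely ``small'' but, by the defining property of $V_0$, already Roelcke precompact, and the class of Roelcke precompact sets is stable enough (under translations, finite unions, and re-feeding into Definition \ref{rpc def}) that the product collapses to the required form with no estimates at all. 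I would then sanity-check against Example \ref{not stable under products exmp}, where $G$ is not locally Roelcke precompact, no identity neighborhood has a Roelcke precompact square, and the conclusion genuinely fails.
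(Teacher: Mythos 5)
Your proof is correct and is essentially identical to the paper's: the same choice of $W$ with $W^{2} \subseteq V \cap U$, the same regrouping $AB \subseteq W(F_{A}W^{2}F_{B})W$, and the same appeal to the Roelcke precompactness of $F_{A}W^{2}F_{B}$ via Lemma \ref{closure properties of R} before applying Definition \ref{rpc def} a second time. The only difference is cosmetic (you form $\bigcup_{a,b} aW^{2}b$ in one step where the paper translates on one side at a time).
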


\begin{proof}
	Fix an open, Roelcke precompact subset $ U $, which we may assume contains $ 1_{G} $ by Lemma \ref{closure properties of R}. Now suppose that $ A $ and $ B $ are also Roelcke precompact subsets of $ G $, and that $ V $ is an identity neighborhood in $ G $. We will produce a finite $ F \subseteq G $ with $ AB \subseteq VFV $.
	
	Choose an open neighborhood of the identity $ W $ satisfying $ W^{2} \subseteq V \cap U $. So there are finite $ E_{A} $ and $ E_{B} $ so that $ A \subseteq WE_{A}W $ and $ B \subseteq W E_{B} W $. Recall from Lemma \ref{closure properties of R} that the collection of Roelcke precompact subsets of $ G $ is hereditary and closed under left/right translations and finite unions. Thus $ W^{2} \subseteq U $ is Roelcke precompact, as are $ W^{2} E_{B} = \bigcup_{f \in E_{B}} W^{2}f $ and $ E_{A}W^{2}E_{B} = \bigcup_{f \in E_{A}} f W^{2} E_{B} $. So there is a finite $ F \subseteq G $ with $ E_{A} W^{2} E_{B} \subseteq W F W $. Therefore,
	\[ AB \subseteq (W E_{A} W)(W E_{B} W) = W (E_{A} W^{2} E_{B}) W \subseteq W (WFW) W = W^{2} F W^{2} \subseteq VFV. \]
\end{proof}

This fact will be key in characterizing locally Roelcke precompact groups below.

\section{Equivalent characterizations of locally Roelcke precompact Polish groups} \label{sec characterizations}

In this section, we let $ G $ denote an arbitrary a Polish group, and $ X $ its completion in the Roelcke uniformity. Equivalently, $ X $ is the uniform structure associated to the metric completion of $ (G,d_{\wedge}) $, where $ d_{\wedge} $ is computed from any left-invariant metric, $ d_{L} $, as $ d_{\wedge}(f,g) = \inf_{h \in G}\max\{d_{L}(f,h),d_{L}(h^{-1},g^{-1})\} $. Since a subset of $ G $ is Roelcke precompact if and only if its closure in the Roelcke completion of $ G $ is compact and, specifically, $ G $ is a Roelcke precompact group if and only if $ X $ is compact, one may then wonder if the completion of a \emph{locally} Roelcke precompact group must be \emph{locally} compact.

Let us first observe that this is not an immediate parsing of definitions as it is for Roelcke precompact groups. One definition says that every $ x \in X $ has a compact neighborhood, while the other says this is true of every $ x $ in the comeagre subset $ G \subseteq X $. And there are certainly instances of non-locally compact spaces with comeagre locally compact subsets, for instance $ \{\lambda e_{k} \in \ell_{2} \mid k \in \mathbb{N} \text{ and } \lambda \in [0,1] \} $ becomes locally compact when $ \vec{0} $ is removed. And though $ X $ has a lot more homogeneity than that example---as multiplication extends to an action $ G \curvearrowright X $ with comeagre orbit $ G \subseteq X $---it is not a matter of taking a compact neighborhood of an element of $ G $ and pushing it around by this action, as the next example shows.

\begin{exmp} \label{example aut tree}
	Let $ G = \operatorname{Aut}(\mathcal{T}_{\infty}) $, which is locally Roelcke precompact by Theorem \ref{example schema for lrpc isometry groups}. Fix any elements $ a,b \in \mathcal{T}_{\infty} $. If $ V = \operatorname{Stab}(a,b) $, then as noted in Section \ref{sec examples}, $ f \in VgV $ if and only if $ (a,b,ga,gb) \models \operatorname{tp}(a,b,fa,fb) $, and therefore $ d(a,fb) = d(a,gb) $ in the graph metric on $ \mathcal{T}_{\infty} $. So every Roelcke-Cauchy sequence, $ (f_{n})_{n \in \mathbb{N}} $, eventually decides a value for $ d(a,f_{n}b) $ and the function $ \Delta \colon G \mapsto \mathbb{N} $ taking $ g \mapsto d(a,gb) $ is uniformly continuous with respect to the Roelcke uniformity on $ G $ and the discrete uniformity on $ \mathbb{N} $ and therefore extends to a continuous function $ \Delta \colon X \to \mathbb{N} $.
	
	So if $ K $ is any compact subset of $ X $, $ K $ sees only finitely many values in the above function, say $ R = \max \Delta[K] + 1  $. Fix any enumeration $ p_{0}, p_{1}, p_{2}, \dots $ of $ \mathcal{T}_{\infty} $, and for each $ n \in \mathbb{N} $ let $ f_{n} \in G $ be an automorphism satisfying $ d(p_{i},f_{n}p_{j}) = d(p_{i},p_{0}) + R + d(p_{0},p_{j}) $ for all $ i,j \leqslant n $. Such an automorphism can be found by fixing a $ q \in \mathcal{T}_{\infty} $ of distance $ R $ from $ p_{0} $, setting $ p_{0} \mapsto q $, and choosing the elements $ f_{n}p_{i} $ to lie on the subtree obtained by removing all vertices whose path to $ q $ passes through the path from $ p_{0} $ to $ q $. Then $ (f_{n})_{n \in \mathbb{N}} $ is Roelcke-Cauchy, and $ y = \lim_{n} f_{n} \in X $ has the following property: for any $ g \in G $,
	\[ \Delta(g \cdot y) = \lim_{n} d(g^{-1}a,f_{n}b) \geqslant R \]
	Therefore $ G \cdot y \cap K = \emptyset $, or equivalently $ y \cap G \cdot K = \emptyset $. So for any compact set in $ X $ (and so also for any open set with compact closure) there is an element of $ X $ that does not lie in any translate.
\end{exmp}

However, we will indeed show that the Roelcke completion of a locally Roelcke precompact group is locally compact. As it turns out, the example above is characteristic of groups that do not have \emph{bounded geometry}. Specifically, it is shown in \cite[Chapter 5]{rosendal-book} that in the action $ G \curvearrowright X $ of a locally Roelcke precompact group on its Roelcke completion that extends left multiplication, there is a compact set whose $ G $-saturation is $ X $ if and only if $ G $ has bounded geometry. He then obtains a characterization of bounded geometry for all Polish groups in terms of certain types of actions on locally compact spaces. This is done by embedding such a group into $ \operatorname{Iso}(\mathbb{U}) $ and analyzing the action of $ G $ on a closed, invariant subspace of the completion of $ \operatorname{Iso}(\mathbb{U}) $, which is locally compact by Theorem \ref{LRPC iff completion is locally compact} below.

Our first characterization of the locally Roelcke precompact Polish groups is geometric, identifying them among the locally bounded ones. Recall that every Roelcke precompact subset of a Polish group is coarsely bounded, and so these groups must, in particular, be locally bounded. In general, a Polish group may contain coarsely bounded subsets that are not Roelcke precompact, for example the group, $ \mathbb{Z}^{\mathbb{N}} \rtimes S_{\infty} $, of Example \ref{not stable under products exmp}. However, the locally Roelcke precompact Polish groups are precisely the locally bounded ones in which these two ideals coincide.

\begin{thm} \label{LRPC iff locally bounded and OB=RPC}
	A Polish group, $ G $, is locally Roelcke precompact if and only if it is locally bounded and every coarsely bounded subset is Roelcke precompact. 
\end{thm}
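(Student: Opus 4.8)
The plan is to prove the two directions separately, with the forward direction being essentially immediate from the preliminary results and the reverse direction being where the real work lies. For the forward direction, suppose $G$ is locally Roelcke precompact, witnessed by an open Roelcke precompact $U \ni 1_G$. Every Roelcke precompact set is coarsely bounded (noted in the excerpt via Rosendal's characterization), so $U$ is a coarsely bounded identity neighborhood and $G$ is locally bounded. For the other half, let $A \subseteq G$ be coarsely bounded; I want to show $A$ is Roelcke precompact. Since $U$ is an open coarsely bounded identity neighborhood, by the characterization of coarsely bounded sets there is a finite $F \subseteq G$ and $n \in \mathbb{N}$ with $A \subseteq (FU)^n$. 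Now $(FU)^n$ is a finite union of sets of the form $f_1 U f_2 U \cdots f_n U$; each such set is a product of finitely many Roelcke precompact sets (singletons are Roelcke precompact, $U$ is Roelcke precompact, and translates of Roelcke precompact sets are Roelcke precompact by Lemma \ref{closure properties of R}), hence Roelcke precompact by iterating Proposition \ref{R closed under products in LRPC}. A finite union of Roelcke precompact sets is Roelcke precompact, again by Lemma \ref{closure properties of R}, so $(FU)^n$ is Roelcke precompact and therefore so is its subset $A$.

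For the converse, assume $G$ is locally bounded and every coarsely bounded set is Roelcke precompact. Since $G$ is locally bounded and Polish, it carries a coarsely proper left-invariant compatible metric $d_L$, and in particular there is a coarsely bounded open identity neighborhood $U$ (as recalled in Section \ref{sec background on uniform and coarse}, the Baire category theorem gives such a set). By hypothesis, $U$ is then Roelcke precompact, so $U$ is an open Roelcke precompact subset of $G$, which is exactly the definition of locally Roelcke precompact.

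The main obstacle is the iteration of Proposition \ref{R closed under products in LRPC} in the forward direction: the proposition as stated closes the ideal of Roelcke precompact sets under products of \emph{two} sets, and I need to be careful that the hypothesis ``$G$ is locally Roelcke precompact'' — which is what is being assumed in this direction — is exactly what licenses that proposition, so there is no circularity. Closing under products of two sets immediately gives closure under products of any finite number by associativity and induction, and the remaining steps (that singletons and $U$ and their translates and finite unions are Roelcke precompact) are all from Lemma \ref{closure properties of R}. The only genuinely nontrivial external input being invoked is the characterization of coarsely bounded subsets of a Polish group as those contained in some $(FU)^n$, quoted from \cite{rosendal-book}; everything else is bookkeeping with the closure properties already established.
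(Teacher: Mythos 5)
Your proposal is correct and follows essentially the same route as the paper's proof: both directions use exactly the same ingredients, namely Rosendal's $(FU)^{n}$ characterization of coarsely bounded sets, the closure properties of Lemma \ref{closure properties of R}, and Proposition \ref{R closed under products in LRPC} (correctly noting that its hypothesis is available precisely in the forward direction, so there is no circularity). The only cosmetic difference is that the paper first forms $FU=\bigcup_{f\in F}fU$ as a single Roelcke precompact set and then takes its $k$-th power, whereas you decompose $(FU)^{n}$ into a finite union of products of translates; these are the same bookkeeping.
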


\begin{proof}
		($ \impliedby $): As $ G $ is locally bounded, there is a coarsely-bounded neighborhood, $ U $. And as every coarsely bounded set is Roelcke precompact, so too is $ U $.
		
		($ \implies $): Let $ U $ be a Roelcke precompact neighborhood. Then $ U $ is, in particular, coarsely bounded, and so $ G $ is locally bounded. If $ A \subseteq G $ is any coarsely bounded set, there is a finite $ F $ and a bound $ k \in \mathbb{N} $ so that $ A \subseteq (FU)^{k} $. By Lemma \ref{closure properties of R}, the Roelcke precompact subsets are closed under finite unions and left translations. Thus $ FU = \bigcup_{f \in F} fU $ is Roelcke precompact. Then by Proposition \ref{R closed under products in LRPC}, $ (FU)^{k} $ is Roelcke precompact, and again by Lemma \ref{closure properties of R}, so too is $ A $.
\end{proof}

In particular, if a Polish group is coarsely bounded but not Roelcke precompact, then it also fails to be locally Roelcke precompact.

\begin{cor} \label{RPC iff LRPC and OB}
	A Polish group is Roelcke precompact if and only if it is coarsely bounded and locally Roelcke precompact.
\end{cor}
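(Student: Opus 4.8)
The plan is to derive this directly from Theorem \ref{LRPC iff locally bounded and OB=RPC}, since the corollary is essentially the special case of that theorem applied to $ G $ as a subset of itself. I do not expect any genuine obstacle here; the work is in unwinding definitions.

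First I would handle the forward direction. Suppose $ G $ is Roelcke precompact, i.e.\ $ G $ is a Roelcke precompact subset of itself in the sense of Definition \ref{rpc def}. As recalled just before Definition \ref{lrpc def}, every Roelcke precompact subset of a Polish group is coarsely bounded, so $ G $ is coarsely bounded as a group. Moreover $ G $ is itself an open Roelcke precompact subset of $ G $, so $ G $ is trivially locally Roelcke precompact by Definition \ref{lrpc def} (taking $ U = G $).

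For the converse, suppose $ G $ is coarsely bounded and locally Roelcke precompact. Being locally Roelcke precompact, $ G $ satisfies the right-hand side of Theorem \ref{LRPC iff locally bounded and OB=RPC}: in particular every coarsely bounded subset of $ G $ is Roelcke precompact. Since $ G $ is coarsely bounded as a group, $ G $ is a coarsely bounded subset of itself, hence a Roelcke precompact subset of itself, which is exactly the assertion that $ G $ is a Roelcke precompact group (Definition \ref{rpc group def}). This completes both directions, and the only point requiring care is to cite the already-established implication ``locally Roelcke precompact $ \implies $ every coarsely bounded subset is Roelcke precompact'' rather than re-proving it.
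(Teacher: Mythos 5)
Your proof is correct and follows essentially the same route as the paper: the converse direction is exactly the paper's argument (apply Theorem \ref{LRPC iff locally bounded and OB=RPC} to $G$ itself as a coarsely bounded subset), and the forward direction, which the paper leaves implicit, is the routine unwinding you give. No issues.
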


\begin{proof}
	If $ G $ is locally Roelcke precompact and coarsely bounded, then by Theorem \ref{LRPC iff locally bounded and OB=RPC}, every coarsely bounded set---in particular, $ G $ itself---is Roelcke precompact.
\end{proof}

Since a locally Roelcke precompact Polish group, $ G $, is locally bounded, it admits a compatible, left-invariant, \emph{coarsely proper} metric, $ d_{L} $. Specifically, such a metric $ d_{L} $ induces both the left uniformity and the left-coarse structure, and therefore it is also compatible with the topology of $ G $ and assigns finite diameter to a subset if and only if that subset is coarsely bounded. Fix such a metric, and form the associated metric $ d_{\wedge}(f,g) = \inf_{h \in G}\max\{d_{L}(f,h),d_{L}(h^{-1},g^{-1})\} $. Then by Proposition \ref{meet and join of coarse}, $ d_{\wedge} $ is compatible with the Roelcke uniformity and the meet-coarse structure. Therefore
\begin{itemize}
	\item the metric, $ d_{\wedge} $, is compatible with the topology on $ G $,
	\item the metric completion, $ (X,d_{\wedge}) $, of $ (G,d_{\wedge}) $ is the Roelcke completion of $ G $, and
	\item the metric $ d_{\wedge} $ assigns finite diameter to a subset of $ G $ if and only if it is coarsely bounded.
\end{itemize}
Though they always agree on the bounded sets, in general the spaces $ (G,d_{L}) $ and $ (G,d_{\wedge}) $ are not coarsely equivalent. For example, $ (\operatorname{Aut}(\mathcal{T}_{\infty}),d_{\wedge}) $ is coarsely equivalent to its completion, which by the following theorem is a proper metric space. But by Rosendal's characterization of groups with bounded geometry, $ (\operatorname{Aut}(\mathcal{T}_{\infty}),d_{L}) $ cannot be coarsely equivalent to such a space.

Recall that a metric on a locally compact space is \emph{proper} if it satisfies the Heine-Borel theorem---that is, if closed, bounded subsets of the space are compact. Now we see that the locally Roelcke precompact Polish groups are precisely those for which the completion in the Roelcke uniformity is locally compact.

\begin{thm} \label{LRPC iff completion is locally compact}
	A Polish group, $ G $, is locally Roelcke precompact if and only if its completion, $ X $, in the Roelcke uniformity is locally compact. In this case, if $ d_{L} $ is a compatible, left-invariant, coarsely-proper metric for $ G $, then the extension of $ d_{\wedge} $ to $ X $ is a proper metric.
\end{thm}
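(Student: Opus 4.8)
The plan is to establish the two implications and then the properness claim, with a single ``key lemma''---that every $d_\wedge$-ball in $X$ has compact closure---absorbing all the real content (which itself rests on Theorem~\ref{LRPC iff locally bounded and OB=RPC}, hence ultimately on Proposition~\ref{R closed under products in LRPC}).

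First I would dispatch the easy direction: if $X$ is locally compact, choose an open $O \subseteq X$ with $1_G \in O$ and $\overline{O}$ compact; then $O \cap G$ is an open neighborhood of $1_G$ in $G$ whose closure in $X$ is a closed subset of $\overline{O}$, hence compact, so $O \cap G$ is Roelcke precompact (using the identification of Roelcke precompact subsets of $G$ with those having compact closure in the Roelcke completion, as discussed after Proposition~\ref{finite subset of the RPC set}), and $G$ is locally Roelcke precompact. No metric is needed here.

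For the converse, suppose $G$ is locally Roelcke precompact. An open Roelcke precompact set is coarsely bounded, so $G$ is locally bounded; fix a compatible, left-invariant, coarsely proper metric $d_L$ and form $d_\wedge$ as in the statement. By Proposition~\ref{meet and join of coarse} the metric $d_\wedge$ induces the Roelcke uniformity, so the metric completion of $(G,d_\wedge)$ is $X$, and a subset of $G$ has finite $d_\wedge$-diameter if and only if it is coarsely bounded; extend $d_\wedge$ to $X$. Now the key lemma: for every $x \in X$ and $t > 0$, the ball $B_{d_\wedge}^X(x,t)$ has compact closure in $X$. To see this, set $A = B_{d_\wedge}^X(x,t) \cap G$. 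Any two points of $A$ lie within $2t$ of each other, so $A$ has finite $d_\wedge$-diameter, hence is coarsely bounded, hence---by Theorem~\ref{LRPC iff locally bounded and OB=RPC}---Roelcke precompact, so $\overline{A}^X$ is compact. On the other hand $B_{d_\wedge}^X(x,t) \subseteq \overline{A}^X$: any $y$ in the ball is a limit of some $g_n \in G$, and since $d_\wedge(x,g_n) \to d_\wedge(x,y) < t$ one has $g_n \in A$ eventually. Thus $\overline{B_{d_\wedge}^X(x,t)}^X$ is a closed subset of the compact set $\overline{A}^X$, so it is compact. From the key lemma, $\overline{B_{d_\wedge}^X(x,1)}^X$ is a compact neighborhood of $x$ for each $x \in X$, so $X$ is locally compact---and since local compactness is a topological property of $X$, it does not depend on the auxiliary coarsely proper metric. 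For properness, let $B \subseteq X$ be closed and nonempty with $\operatorname{diam}_{d_\wedge}(B) \leqslant D < \infty$; picking $x_0 \in B$ we get $B \subseteq \overline{B_{d_\wedge}^X(x_0, D+1)}^X$, which is compact by the key lemma, so $B$ is compact. Hence the extended $d_\wedge$ has the Heine--Borel property and is proper.

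The one subtlety worth flagging is conceptual rather than computational: one should resist the temptation to manufacture a compact neighborhood of an arbitrary $x \in X$ by translating a compact neighborhood of $1_G$ under the extended action $G \curvearrowright X$, since Example~\ref{example aut tree} shows the $G$-saturation of a compact set can miss points of $X$. The argument above avoids this by bounding the $d_\wedge$-diameter of $B_{d_\wedge}^X(x,t) \cap G$ directly and invoking the coincidence of the coarsely bounded and Roelcke precompact ideals from Theorem~\ref{LRPC iff locally bounded and OB=RPC}; the genuine difficulty behind that theorem---stability of the Roelcke precompact ideal under products---has already been handled in Proposition~\ref{R closed under products in LRPC}.
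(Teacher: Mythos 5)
Your proposal is correct and follows essentially the same route as the paper: both directions hinge on the observation that the trace on $G$ of a $d_{\wedge}$-ball in $X$ is coarsely bounded, hence Roelcke precompact by Theorem~\ref{LRPC iff locally bounded and OB=RPC}, so that by density its closure in $X$ is compact, yielding properness and local compactness. The only cosmetic difference is that you package this as a lemma about balls centered at arbitrary points of $X$ (using the diameter bound $2t$), while the paper centers the enclosing ball at a point of $G$ so that coarse boundedness is immediate.
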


\begin{proof}
	If $ X $ is locally compact, the element $ 1_{G} \in G \subseteq X $ has a compact neighborhood, $ K \subseteq X $, and so $ K \cap G $ is a Roelcke precompact neighborhood of $ 1_{G} $ in $ G $.
	
	Conversely, we suppose $ G $ is locally Roelcke precompact and that $ d_{\wedge} $ is computed from a compatible, left-invariant, coarsely proper metric, $ d_{L} $, as described above. Now suppose $ A $ is a closed, bounded subset of $ (X,d_{\wedge}) $. Then $ A \subseteq B = B_{(X,d_{\wedge})}(h,r)$ for some $ h \in G \subseteq X $ and $ r > 0 $. Then $ B \cap G = B_{(G,d_{\wedge})}(h,r) $ is a coarsely bounded subset of $ G $, and so is Roelcke precompact by Theorem \ref{LRPC iff locally bounded and OB=RPC}. Thus the closure in $ X $, $ \overline{B} = \overline{B \cap G} $, is compact, while $ A $ is a closed subset of $ \overline{B} $ and so also compact. So $ (X,d_{\wedge}) $ is a proper metric space, and in particular $ X $ is locally compact.
\end{proof}

One consequence of the above is a restriction on properties of locally Roelcke precompact groups. Recall that $ G $ is \emph{Weil complete} if it is complete in the left uniformity. For Polish groups this is equivalent to the existence of a complete, left-invariant metric, and so these groups are also called \emph{CLI} (``complete left invariant''). For example, any solvable Polish group is Weil complete \cite[Corollary 3.7]{hjorth1999vaught}, as is any locally compact Polish group. These latter groups are the only Weil complete locally Roelcke precompact Polish groups.

\begin{cor} \label{cor loc comp iff cli and lrpc}
	A Polish group is locally compact if and only if it is Weil complete and locally Roelcke precompact.
\end{cor}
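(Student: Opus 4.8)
\emph{Proof proposal.} The plan is to obtain both directions by feeding Theorem \ref{LRPC iff completion is locally compact} the extra hypothesis of Weil completeness, using the fact—already invoked in the proof of Proposition \ref{R closed under products in Weil complete}, via \cite[Theorem 11.4]{roelcke1981uniform}—that a group complete in the left uniformity is complete in the Roelcke uniformity, so that its Roelcke completion is itself.

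For the forward implication, suppose $G$ is locally compact. Then $1_G$ has a compact neighborhood, which is Roelcke precompact (cover it by the open sets $VgV$ and pass to a finite subcover, exactly as in Section \ref{sec examples}), so $G$ is locally Roelcke precompact. That a locally compact Polish group is Weil complete is classical—a locally compact Hausdorff topological group is complete in its left uniformity—and it has already been recorded above in the discussion preceding the corollary.

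For the reverse implication, suppose $G$ is Weil complete and locally Roelcke precompact. By \cite[Theorem 11.4]{roelcke1981uniform}, $G$ is then complete in the Roelcke uniformity, so its Roelcke completion $X$ equals $G$. Since $G$ is locally Roelcke precompact, Theorem \ref{LRPC iff completion is locally compact} shows $X$ is locally compact, and hence so is $G = X$.

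There is no real obstacle to overcome here: the argument is just an assembly of Theorem \ref{LRPC iff completion is locally compact} with two facts that are already cited or used in the text, namely the equivalence of Weil completeness with Roelcke completeness and the Weil completeness of locally compact Polish groups. The only place warranting a moment's care is the identification $X = G$ in the Weil complete case, i.e.\ the observation that a uniform space complete in a given uniformity coincides with its completion in that uniformity.
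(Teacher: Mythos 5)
Your proof is correct and is essentially identical to the paper's own argument: both directions use the fact that a compact identity neighborhood is Roelcke precompact, the Weil completeness of locally compact Polish groups, and the Roelcke--Dierolf result that Weil completeness implies Roelcke completeness so that $G = X$, after which Theorem \ref{LRPC iff completion is locally compact} finishes the reverse implication.
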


\begin{proof}
	Every locally compact Polish group is complete in the left uniformity. And in general, every compact set is Roelcke precompact, so every locally compact Polish group is also locally Roelcke precompact.
	
	Conversely, if $ G $ is Weil complete and Polish, then $ G $ is also complete in the Roelcke uniformity \cite[Proposition 11.4]{roelcke1981uniform} and thus $ G $ coincides with its Roelcke completion, and so is locally compact by Theorem \ref{LRPC iff completion is locally compact}.
\end{proof}

\subsection{Closure properties}

\begin{thm}
	Suppose $ G $ is Polish and $ H $ is an open subgroup of $ G $. Then $ G $ is locally Roelcke precompact if and only if $ H $ is.
\end{thm}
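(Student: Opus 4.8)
The plan is to reduce the whole statement to a single observation: for a subset $A$ of the open subgroup $H$, being Roelcke precompact relative to $H$ is equivalent to being Roelcke precompact relative to $G$. First note that an open subgroup is also closed, hence a Polish group in its own right, so the conclusion ``$H$ is locally Roelcke precompact'' is meaningful. The key lemma is: for $A \subseteq H$, $A$ is a Roelcke precompact subset of $H$ if and only if it is a Roelcke precompact subset of $G$. The forward direction is routine: given an identity neighborhood $V$ in $G$, the set $V \cap H$ is an identity neighborhood in $H$, so there is a finite $F \subseteq H$ with $A \subseteq (V\cap H)F(V\cap H) \subseteq VFV$, the relevant products being computed identically in $H$ and in $G$. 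The converse is the only place a nontrivial input enters: given an identity neighborhood $V$ in $H$, openness of $H$ makes $V$ an identity neighborhood in $G$ as well, so Roelcke precompactness of $A$ in $G$ together with Proposition \ref{finite subset of the RPC set} yields a finite $F \subseteq A \subseteq H$ with $A \subseteq VFV$; since $F$ and a neighborhood basis element inside $V$ lie in $H$, this witnesses Roelcke precompactness of $A$ inside $H$.

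Granting the lemma, both implications are short. If $G$ is locally Roelcke precompact, take an open Roelcke precompact $U \subseteq G$; by Lemma \ref{closure properties of R} (closure under left translation) we may assume $1_G \in U$. Then $U \cap H$ is open in $H$, contains $1_H$, is a subset of the Roelcke precompact set $U$ hence Roelcke precompact in $G$ by Lemma \ref{closure properties of R}, and therefore Roelcke precompact in $H$ by the key lemma; so $H$ is locally Roelcke precompact. Conversely, if $H$ is locally Roelcke precompact, take an open Roelcke precompact (in $H$) set $U \subseteq H$; being open in the open subgroup $H$, the set $U$ is open in $G$, and it is Roelcke precompact in $G$ by the key lemma, so $G$ is locally Roelcke precompact.

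The only real obstacle is the converse direction of the key lemma: a priori, the finite set witnessing Roelcke precompactness of $A$ in $G$ might lie outside $H$ and so be of no use inside $H$. This is exactly what Proposition \ref{finite subset of the RPC set} takes care of, since it allows the finite set to be chosen inside $A$. Everything else amounts to the fact that the topology, group operations, and identity element of $H$ are inherited from those of $G$ precisely because $H$ is open.
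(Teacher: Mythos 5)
Your proof is correct and follows essentially the same route as the paper's: the paper's two directions are exactly your two directions of the ``key lemma'' (using Proposition \ref{finite subset of the RPC set} to keep the finite set inside $H$ in one direction, and the inclusion $(V\cap H)F(V\cap H)\subseteq VFV$ in the other), applied to $U\cap H$ and to $U$ respectively. Packaging this as a standalone equivalence of the two notions of Roelcke precompactness for subsets of $H$ is a clean way to organize the same argument.
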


\begin{proof}
	Suppose $ U \subseteq G $ is an open Roelcke precompact identity neighborhood. By Lemma \ref{closure properties of R}, $ W = U \cap H $ is a Roelcke precompact set in $ G $. If $ V \subseteq H $ is open in $ H $, then it is also open in $ G $. So there is a finite $ F \subseteq W $ (recall Proposition \ref{finite subset of the RPC set}) with $ W \subseteq VFV $. Thus $ W $ is Roelcke precompact as a subset of $ H $.
	
	Conversely, suppose $ U \subseteq H $ is an open Roelcke precompact neighborhood in $ H $. Then $ U $ is also open in $ G $. Moreover, if $ V \subseteq G $ is an open identity neigbhorhood, there is a finite subset $ F \subseteq H $ with $ U \subseteq (V \cap H)F(V \cap H) \subseteq VFV $.
\end{proof}

Note that this is not true in general for a closed subgroup of a Polish group. For example, both $ \operatorname{Homeo}([0,1]^{\mathbb{N}}) $ and $ \operatorname{Iso}(\mathbb{U}_{1}) $ are universal Polish groups \cite{uspenskii1986universal,uspenskij2008subgroups}, thus each has a topologically isomorphic copy of the other embedded as a closed subgroup. However, $ \operatorname{Iso}(\mathbb{U}_{1}) $ is a Roelcke precompact group \cite{uspenskij2008subgroups,rosendal2009topological}, while $ \operatorname{Homeo}([0,1]^{\mathbb{N}}) $ is coarsely bounded but not Roelcke precompact \cite{rosendal2009topological,rosendal2013global}, so cannot be locally Roelcke precompact by Corollary \ref{RPC iff LRPC and OB}.

\begin{thm}
	If $ G $ is Polish locally Roelcke precompact and $ N $ is a closed normal subgroup, then the quotient group, $ G/N $, is locally Roelcke precompact.
\end{thm}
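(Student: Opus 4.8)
The plan is to push the Roelcke precompact identity neighborhood of $ G $ forward along the quotient homomorphism $ \pi \colon G \to G/N $. Recall first that since $ N $ is closed and normal, $ G/N $ is again a Polish group and $ \pi $ is a continuous, open, surjective homomorphism. As $ G $ is locally Roelcke precompact, Lemma \ref{closure properties of R} (closure under left translation) lets us fix an open Roelcke precompact set $ U \subseteq G $ with $ 1_{G} \in U $. I will show that $ \pi(U) $ witnesses local Roelcke precompactness of $ G/N $.

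First, $ \pi(U) $ is an open neighborhood of $ 1_{G/N} $: it is open because $ \pi $ is an open map, and it contains $ 1_{G/N} = \pi(1_{G}) $. It then remains only to check that $ \pi(U) $ is a Roelcke precompact subset of $ G/N $. So let $ \bar{V} $ be an arbitrary open neighborhood of $ 1_{G/N} $. Then $ V = \pi^{-1}(\bar{V}) $ is an open neighborhood of $ 1_{G} $ in $ G $, and since $ \pi $ is surjective, $ \pi(V) = \bar{V} $. By Roelcke precompactness of $ U $ in $ G $, there is a finite $ F \subseteq G $ with $ U \subseteq VFV $. Applying $ \pi $ and using that $ \pi $ is a homomorphism, $ \pi(U) \subseteq \pi(VFV) = \pi(V)\pi(F)\pi(V) = \bar{V}\,\pi(F)\,\bar{V} $, where $ \pi(F) $ is a finite subset of $ G/N $. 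As $ \bar{V} $ was arbitrary, $ \pi(U) $ is Roelcke precompact, and hence $ G/N $ is locally Roelcke precompact.

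The argument is essentially a direct unwinding of the definitions; the only points requiring a moment's thought are that every identity neighborhood of $ G/N $ is of the form $ \pi(V) $ for an identity neighborhood $ V $ of $ G $ (this uses that $ G/N $ carries the quotient topology, so $ \pi $ is open and $ \pi $-preimages of open sets are open, whence $ \pi(\pi^{-1}(\bar{V})) = \bar{V} $ by surjectivity), and that $ \pi $ carries $ VFV $ onto $ \pi(V)\pi(F)\pi(V) $ because $ \pi $ is a group homomorphism. I do not expect any genuine obstacle beyond these bookkeeping facts: in contrast with the passage to closed subgroups (which fails, as the remark following the preceding theorem shows), quotients behave well precisely because Roelcke precompactness is phrased purely in terms of the multiplicative expression $ VFV $, and that expression is preserved by homomorphic images.
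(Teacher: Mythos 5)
Your proof is correct and follows essentially the same route as the paper's: pull the identity neighborhood $\bar{V}$ of $G/N$ back to $V=\pi^{-1}(\bar{V})$, apply Roelcke precompactness of $U$ in $G$, and push the resulting containment $U\subseteq VFV$ forward through the homomorphism $\pi$. The only difference is cosmetic bookkeeping (explicitly translating $U$ to contain $1_G$), so there is nothing further to add.
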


\begin{proof}
	Let $ \pi\colon G \to G/N $ be the (continuous, open) quotient map. Suppose $ U \subseteq G $ is an open Roelcke precompact identity neighborhood. Then $ \pi[U] $ is open in $ G/N $, and if $ V $ is open in $ G/N $, there is a finite $ F \subseteq G $ such that $ U \subseteq \pi^{-1}[V] F \pi^{-1}[V] $, and so
	\[ \pi[U] \subseteq \pi[\pi^{-1}[V] F \pi^{-1}[V]] = (\pi\pi^{-1}[V]) (\pi[F]) (\pi\pi^{-1}[V]) = V \pi[F] V. \]
\end{proof}

\section{Semigroup operations extending multiplication} \label{sec semigroup operation}

The results of this section are inspired by those of section 5 of \cite{ben2016weakly}, that for $ G $ the (necessarily Roelcke precompact and Polish) automorphism group of a separable metric structure, every Roelcke uniformly continuous function on $ G $ is weakly almost periodic if and only if the theory of the structure is stable. It was pointed out to the author by C. Rosendal that this is equivalent to multiplication on $ G $ extending to a separately continuous semigroup operation its Roelcke completion. This can be seen directly, much in the manner of the proof of Theorem \ref{thm loc comp semitopological semigroup} below, or abstractly---as for Roelcke precompact groups, the Roelcke completion coincides with the compactification associated to the Roelcke uniformly continuous functions and factors onto the compactification associated to the weakly almost periodic functions, which is the universal semi-topological semigroup compactification of $ G $. Therefore, the Roelcke completion and WAP-compactification coincides when the corresponding function algebras do. The authors of \cite{ben2016weakly} also offer a model theoretic interpretation of this semigroup structure.

In general \cite[Chapter 10]{roelcke1981uniform}, for $ X $ the Roelcke completion of $ G $, there is a maximal subset $ M \subseteq X \times X $ that supports a jointly continuous operation extending multiplication $ G \times G \to X $. There are natural embeddings of the left and right completions, $ \widehat{G}^{\mathscr{L}} $ and $ \widehat{G}^{\mathscr{R}} $, of $ G $ into $ X $ making all diagrams commute, which satisfy
\[ \widehat{G}^{\mathscr{L}} \times \widehat{G}^{\mathscr{L}}, \ \widehat{G}^{\mathscr{R}} \times \widehat{G}^{\mathscr{R}}, \ \widehat{G}^{\mathscr{R}} \times X, \ X \times \widehat{G}^{\mathscr{L}} \subseteq M. \]
Thus multiplication on $ G $ extends to the above sets, giving $ \widehat{G}^{\mathscr{L}} $ and $ \widehat{G}^{\mathscr{R}} $ the structure of topological semigroups, with jointly continuous actions $ \widehat{G}^{\mathscr{R}} \curvearrowright X $ and $ X \curvearrowleft \widehat{G}^{\mathscr{L}} $. However, multiplication extends to a jointly continuous operation on all of $ X \times X $ if and only if the Roelcke completion of $ G $ coincides with the two-sided completion; in the setting of Polish groups this occurs if and only if $ G = X $, or equivalently if $ G $ is Weil complete, which for locally Roelcke precompact groups means that $ G $ is locally compact by Corollary \ref{cor loc comp iff cli and lrpc}.

So the best that can be hoped for in a general locally Roelcke precompact group is the structure of a \emph{semi-topological} semigroup (i.e., where multiplication is separately continuous). The methods of \cite{ben2016weakly} completely describe this for Roelcke precompact Polish groups. Here we investigate the corresponding situation for the locally Roelcke precompact ones. We first see that in such situations, the operation can be further extended to the one-point compactification of $ X $.

\begin{prop} \label{opc semitopological semigroup}
	Suppose $ G $ is locally Roelcke precompact but not Roelcke precompact, that $ X $ is its Roelcke completion and $ X_{*} $ is the one-point compactification of $ X $. If multiplication in $ G $ extends to a separately continuous semigroup operation on $ X $ then it further extends to a separately continuous semigroup operation on $ X_{*} $ with $ \infty $ defined to be a zero element.
\end{prop}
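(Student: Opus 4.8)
The plan is to put on $X_{*} = X \cup \{\infty\}$ the operation extending the given multiplication on $X$ and decreeing $\infty \cdot a = a \cdot \infty = \infty$ for all $a \in X_{*}$, and to verify this is again a separately continuous semigroup operation. That $\infty$ is a two-sided zero is built in, and associativity on $X_{*}$ follows from associativity on $X$ (part of the hypothesis) together with the trivial observation that if any of three factors is $\infty$ then both triple products equal $\infty$. Since $G$ is not Roelcke precompact, $X$ is not compact, and as $X$ is locally compact Hausdorff (Theorem \ref{LRPC iff completion is locally compact}), $X_{*}$ is compact Hausdorff. So the content is separate continuity. When $a = \infty$ the maps $x \mapsto \infty \cdot x$ and $x \mapsto x \cdot \infty$ are constant. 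For $a \in X$, writing $L_{a}, R_{a} \colon X_{*} \to X_{*}$ for the extended translations, these restrict to the given continuous self-maps of $X$ and fix $\infty$; since $X$ is open in $X_{*}$ with complement $\{\infty\}$, continuity at points of $X$ is automatic, and continuity at $\infty$ amounts to $L_{a}$ (resp.\ $R_{a}$) being a \emph{proper} map of $X$ --- preimages of compact sets are compact --- because the neighborhoods of $\infty$ are exactly the complements of the compact subsets of $X$. So everything reduces to showing that $L_{a}$ and $R_{a}$ are proper for every $a \in X$. This will use that, by Theorem \ref{LRPC iff completion is locally compact}, $d_{\wedge}$ is a proper metric on $X$.

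The device is a uniform distortion estimate for group translations. Fix the compatible, left-invariant, coarsely proper metric $d_{L}$ from which $d_{\wedge}$ is computed, set $d_{R}(f,g) = d_{L}(f^{-1},g^{-1})$ and $d_{\vee} = d_{L} + d_{R}$, and note $d_{\wedge} \leqslant \min\{d_{L}, d_{R}\}$ on $G$ (take $h = g$ or $h = f$ in the defining infimum). A short computation with the triangle inequality and the invariances $d_{L}(hf, hg) = d_{L}(f,g)$, $d_{R}(fh, gh) = d_{R}(f,g)$ yields: if $g \in G$ has $d_{\vee}(1_{G}, g) < L$, then for every $x \in G$,
\[ d_{\wedge}(1_{G}, gx) \geqslant d_{\wedge}(1_{G}, x) - L \qquad\text{and}\qquad d_{\wedge}(1_{G}, xg) \geqslant d_{\wedge}(1_{G}, x) - L. \]
(For the first: $d_{\wedge}(x, gx) \leqslant d_{R}(x, gx) = d_{L}(1_{G}, g^{-1}) \leqslant d_{\vee}(1_{G}, g) < L$, and $d_{\wedge}(1_{G}, x) \leqslant d_{\wedge}(1_{G}, gx) + d_{\wedge}(gx, x)$; the second is symmetric, using $d_{\wedge}(x, xg) \leqslant d_{L}(x, xg) = d_{L}(1_{G}, g)$.) Approximating an arbitrary $x \in X$ by group elements and using continuity of $L_{g}, R_{g}$ on $X$, these inequalities persist for all $x \in X$.

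Now fix $a \in X$. As $d_{\wedge}$ is proper there is $r_{0}$ with $a \in B_{d_{\wedge}}(1_{G}, r_{0})$, and the ball $B_{d_{\wedge}}(1_{G}, r_{0}) \cap G$ is $d_{\wedge}$-bounded, hence coarsely bounded (these ideals coincide for a locally Roelcke precompact group, by Theorem \ref{LRPC iff locally bounded and OB=RPC} and the remarks following it), hence $d_{\vee}$-bounded (Proposition \ref{meet and join of coarse}), so it lies in some ball $B_{d_{\vee}}(1_{G}, L)$. Pick $g_{k} \in G$ with $g_{k} \to a$ in $X$ and (discarding finitely many terms) $d_{\wedge}(1_{G}, g_{k}) < r_{0}$, so $d_{\vee}(1_{G}, g_{k}) < L$. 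For any $x \in X$, separate continuity gives $g_{k}x \to ax$, whence $d_{\wedge}(1_{G}, ax) = \lim_{k} d_{\wedge}(1_{G}, g_{k}x) \geqslant d_{\wedge}(1_{G}, x) - L$. Therefore, if $C \subseteq X$ is compact and $C \subseteq B_{d_{\wedge}}(1_{G}, r)$, then $L_{a}^{-1}(C) \subseteq \overline{B_{d_{\wedge}}(1_{G}, r + L)}$, which is compact by properness of $d_{\wedge}$; being also closed, $L_{a}^{-1}(C)$ is compact, so $L_{a}$ is proper. The identical argument with the second inequality (and $x g_{k} \to xa$) shows $R_{a}$ is proper, which finishes the verification of separate continuity and hence the proposition.

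I expect the properness of the translations $L_{a}, R_{a}$ for $a \in X \setminus G$ to be the only real obstacle: for a merely separately continuous operation one cannot simply transport a compact neighborhood of a point of $G$ around $X$ --- Example \ref{example aut tree} exhibits the failure of such transport in general --- so one needs the quantitative input above. The point that makes it go through is special to the locally Roelcke precompact setting: the ideals of $d_{\wedge}$-bounded, of coarsely bounded, and of $d_{\vee}$-bounded subsets of $G$ all agree, which is what forces the approximating sequence $g_{k} \to a$ to be drawn from a single $d_{\vee}$-ball and hence makes the distortion bound uniform in $k$.
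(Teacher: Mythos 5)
Your proof is correct, and it follows the paper's overall architecture: both arguments reduce the proposition to showing that for each $s \in X$ the translations $x \mapsto sx$ and $x \mapsto xs$ are proper self-maps of $X$ (the paper cites Berglund--Junghenn--Milnes for this reformulation of continuity at $\infty$; you rederive it from the topology of the one-point compactification, which is fine). Where you diverge is in how properness is established. The paper's argument is soft and set-theoretic: given $sx \in K$, it picks a single group element $g_{N}$ close to $s$ in a fixed Roelcke precompact open set $A$ with $g_{N}x$ in the interior of $\overline{B}$ for a Roelcke precompact ball $B \supseteq K \cap G$, writes $x = g_{N}^{-1}(g_{N}x) \in \overline{A^{-1}B}$ using continuity of translation by the genuine group element $g_{N}^{-1}$, and concludes via Proposition \ref{R closed under products in LRPC} (stability of the Roelcke precompact ideal under products). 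Your argument is quantitative: a distortion estimate $d_{\wedge}(1_{G}, sx) \geqslant d_{\wedge}(1_{G},x) - L$ obtained by bounding $d_{\wedge}(x,gx)$ by $d_{R}(1_{G},g)$ and $d_{\wedge}(x,xg)$ by $d_{L}(1_{G},g)$, made uniform over an approximating sequence $g_{k} \to s$ by the chain ($d_{\wedge}$-bounded $\Rightarrow$ coarsely bounded $\Rightarrow$ $d_{\vee}$-bounded) from Proposition \ref{meet and join of coarse}, and closed out by the properness of $d_{\wedge}$ on $X$ from Theorem \ref{LRPC iff completion is locally compact}. Both mechanisms ultimately rest on the same fact---that in a locally Roelcke precompact group the coarsely bounded and Roelcke precompact ideals coincide---but yours makes no use of Proposition \ref{R closed under products in LRPC} directly and instead extracts an explicit Lipschitz-type bound on how far a translation can move points toward infinity, which is a reasonable and slightly more informative alternative.
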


\begin{proof}
	The resulting structure on $ X_{*} $ is always a semigroup; checking that the extended multiplication is separately continuous amounts to checking that this is still true at $ \infty $, that is, for every $ s \in X $ and compact $ K \subseteq X $, the sets $ \{x \in X \mid sx \in K \} $ and $ \{x \in X \mid xs \in K \} $ are compact \cite[Example 1.3.3(d)]{berglund1989analysis}. By Theorem \ref{LRPC iff completion is locally compact} there is a sufficiently large $ d_{\wedge} $-ball, $ B $, around $ 1_{G} $ in $ G $ whose closure in $ X $ contains $ K $. Let $ A $ be any open set in $ G $ whose closure in $ X $ contains $ s $ in the interior, e.g. a $ d_{\wedge} $-ball of radius $ 2 $ around any element of $ G $ of distance less than $ 1 $ from $ s $. Suppose $ sx \in K $. Pick a sequence $ (g_{n})_{n} $ from $ G $ with $ g_{n} \to s $. By separate continuity, $ g_{n}x \to sx $. So there is $ N \in \mathbb{N} $ so that $ g_{N} \in A $ and $ g_{N}x \in \operatorname{int}(\overline{B}) \subseteq X $. Letting $ (h_{m})_{m} $ be a sequence in $ B $ tending to $ g_{N}x $, by separate continuity
	\[ x = g_{N}^{-1}g_{N}x = g_{N}^{-1} \lim_{m} h_{m} = \lim_{m}g_{N}^{-1}h_{m} \in \overline{A^{-1}B}. \]
	Since $ A $ and $ B $ are Roelcke precompact in $ G $ so is $ A^{-1}B $ by Proposition \ref{R closed under products in LRPC}, and as $ x $ was arbitrary, $ \{x \in X \mid sx \in K \} $ is a subset of the compact set $ \overline{A^{-1}B} $. Being the preimage of $ K $ under the map $ x \mapsto sx $, it is also closed, and therefore compact.
\end{proof}

Note that there is a clear converse to this result. In the above proposition, the zero element, $ \infty $, of $ X_{*} $ is \emph{removable} in the sense that $ ab = \infty $ if and only if $ a = \infty $ or $ b = \infty $. And clearly if $ S $ is any semigroup with such an element, $ 0 $, then the subset $ S \setminus \{0\} $ is a sub-semigroup. So multiplication in $ G $ extends to a semi-topological semigroup operation on $ X $ if and only if it extends to a semi-topological semigroup operation on $ X_{*} $ with a removable zero at $ \infty $.

Recall that a function is uniformly continuous with respect to the Roelcke uniformity if and only if it is uniformly continuous in both the left- and right-uniformities. Let $ C_{0}'(G) $ denote those functions on $ G $ that vanish off of the ideal of Roelcke precompact sets:
\[ C_{0}'(G) = \left\{f \mid \forall \varepsilon > 0 \left\{g \in G \mid \left| f(g) \right| > \varepsilon \right\} \text{ is a Roelcke precompact subset of } G \right\}. \]
The idea here is that the Roelcke uniformly continuous functions in $ C_{0}'(G) $ are precisely the restrictions of those in $ C_{0}(X) $.

\begin{thm} \label{thm loc comp semitopological semigroup}
	Let $ G $ be a locally Roelcke precompact Polish group. The group multiplication in $ G $ extends to a separately continous semigroup operation on the Roelcke completion, $ X $, if and only if every Roelcke uniformly continuous function in $ C_{0}'(G) $ is weakly almost periodic.
\end{thm}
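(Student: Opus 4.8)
The statement says: for $G$ locally Roelcke precompact Polish with Roelcke completion $X$, multiplication extends to a separately continuous semigroup operation on $X$ iff every Roelcke uniformly continuous $f \in C_0'(G)$ is weakly almost periodic (WAP). I would prove both directions by relating the two conditions to the one-point compactification $X_*$ and the classical Roelcke-precompact theory. The key bridge is the observation already flagged in the text: the Roelcke uniformly continuous functions in $C_0'(G)$ are exactly the restrictions to $G$ of functions in $C_0(X)$ (this needs a short argument using Theorem~\ref{LRPC iff completion is locally compact}, i.e. that $(X,d_\wedge)$ is proper, so that "vanishing off Roelcke precompact sets" matches "vanishing at infinity in $X$"; plus Roelcke uniform continuity lets a function on $G$ extend continuously to $X$). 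Then $C_0(X) \oplus \mathbb{C}\cdot 1 \cong C(X_*)$, so these functions, together with constants, generate exactly the algebra of continuous functions on $X_*$.

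**Forward direction.** Suppose multiplication extends separately continuously to $X$. By Proposition~\ref{opc semitopological semigroup} it further extends to a separately continuous semigroup operation on $X_*$ with $\infty$ a (removable) zero, so $X_*$ is a compact right- (and left-, by the dual argument) semitopological semigroup compactification of $G$. [If $G$ happens to already be Roelcke precompact then $X=X_*$ is compact and the statement reduces to the Roelcke-precompact case of \cite{ben2016weakly}; so assume not.] A standard fact (Berglund–Junghenn–Milnes, as cited) is that if $G$ admits a semitopological semigroup compactification on which it maps densely, then every function on $G$ arising by pulling back $C$ of that compactification is WAP — equivalently, the WAP-compactification is the universal semitopological semigroup compactification. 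Hence every $h \in C(X_*)$ restricts to a WAP function on $G$. In particular, given a Roelcke uniformly continuous $f \in C_0'(G)$, extend it to $\tilde f \in C_0(X) \subseteq C(X_*)$; then $f = \tilde f\restriction_G$ is WAP.

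**Reverse direction.** Suppose every Roelcke uniformly continuous $f \in C_0'(G)$ is WAP. I want to build the separately continuous extension of multiplication to $X$. The idea is to run the map-into-compactifications argument: the algebra $\mathcal{A}$ generated by $\{f\restriction_G : f \in C_0(X)\} \cup \mathbb{C}\cdot 1$ is, by the bridge lemma, exactly the pullback of $C(X_*)$, and by hypothesis it is contained in $\mathrm{WAP}(G)$. Since the WAP-compactification $G^{\mathrm{WAP}}$ is a semitopological semigroup and $\mathcal{A}$ is a closed translation-invariant unital $C^*$-subalgebra of $\mathrm{WAP}(G)$, the Gelfand spectrum of $\mathcal{A}$ — which is canonically $X_*$ — is a quotient semitopological semigroup of $G^{\mathrm{WAP}}$; in particular the semigroup operation on $G$ extends separately continuously to $X_*$, and then restricts to $X = X_* \setminus \{\infty\}$ (using that $X$ is an open subset of $X_*$ and that removing the zero of a semitopological semigroup leaves a subsemigroup, as in the remark after Proposition~\ref{opc semitopological semigroup}). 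The one point that needs care: one must check that the closure $\overline{\mathcal{A}}$ is genuinely invariant under both left and right translation by $G$ and that the induced operation on the spectrum extends the original multiplication $G \times G \to X$ correctly — this follows from the commuting-diagram structure of the various completions recalled at the start of Section~\ref{sec semigroup operation} (the embeddings $\widehat{G}^{\mathscr{L}}, \widehat{G}^{\mathscr{R}} \hookrightarrow X$ and the actions $\widehat{G}^{\mathscr{R}} \curvearrowright X \curvearrowleft \widehat{G}^{\mathscr{L}}$), so the partially-defined multiplication on $X$ already pins down what the extension must be on the dense set $G \times G$.

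**Main obstacle.** The crux is the bridge lemma identifying the Roelcke uniformly continuous members of $C_0'(G)$ with $C_0(X)\restriction_G$, and the companion fact that this algebra separates points of $X_*$ and is closed under translation — i.e. that it really is the function-algebra description of $X_*$ as a $G$-space. Proving $\subseteq$ (such an $f$ extends to a continuous function on $X$ vanishing at infinity) uses Roelcke uniform continuity to get the continuous extension $\tilde f$ to $X$, and uses properness of $d_\wedge$ (Theorem~\ref{LRPC iff completion is locally compact}) together with the hypothesis that $\{|f|>\varepsilon\}$ is Roelcke precompact — hence has compact closure in $X$ — to see $\tilde f \in C_0(X)$; the reverse inclusion is a restriction-and-uniform-continuity check. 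After that, the semitopological semigroup bookkeeping is the routine machinery of \cite{berglund1989analysis}, invoked exactly as in Proposition~\ref{opc semitopological semigroup}.
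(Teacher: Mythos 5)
Your forward direction is essentially the paper's: extend the operation to the one-point compactification $X_{*}$ via Proposition \ref{opc semitopological semigroup}, identify the Roelcke uniformly continuous members of $C_{0}'(G)$ with the restrictions to $G$ of functions in $C_{0}(X)$ (using properness of $d_{\wedge}$ from Theorem \ref{LRPC iff completion is locally compact}), and quote the fact that continuous functions on a compact semitopological semigroup compactification pull back to weakly almost periodic functions. The reverse direction is where you genuinely diverge. The paper builds the operation by hand: for sequences $g_{n}\to x$ and $h_{m}\to y$ in $G$ it shows all products $g_{n}h_{m}$ lie in a compact subset $\overline{AB}\subseteq X$ (via Proposition \ref{R closed under products in LRPC}), defines $x\cdot y$ as an iterated ultrafilter limit, and uses the hypothesis---through the explicitly constructed function $f(g)=d_{\wedge}(a,b)-\min\{d_{\wedge}(a,b),d_{\wedge}(a,g)\}$ and the double-limit criterion---to show the two iterated limits agree, then checks associativity and separate continuity directly. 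You instead pass to the Gelfand spectrum of the closed, unital, translation-invariant subalgebra $C(X_{*})|_{G}\subseteq\mathrm{WAP}(G)$ and invoke the universality of the WAP compactification; this is a legitimate alternative (admissibility of a norm-closed translation-invariant subalgebra of $\mathrm{WAP}(G)$ follows from Mazur's theorem applied to the orbit of translates), and it delivers well-definedness, associativity, and separate continuity of the operation on $X_{*}$ essentially for free.

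There is, however, one genuine gap in your reverse direction: the step ``then restricts to $X=X_{*}\setminus\{\infty\}$.'' You justify it by citing the remark after Proposition \ref{opc semitopological semigroup} that deleting a removable zero leaves a subsemigroup, but nothing in the abstract construction tells you that $\infty$ is a zero of the semigroup you have just built on $X_{*}$, let alone a removable one---Proposition \ref{opc semitopological semigroup} establishes that only under the hypothesis that the operation on $X$ already exists, which is exactly what you are trying to prove. What you actually need is $X\cdot X\subseteq X$, i.e.\ that $x\cdot y\neq\infty$ for $x,y\in X$, and this does not come from the function-algebra formalism: it is precisely where the geometric content of local Roelcke precompactness enters. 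The fix is the estimate the paper uses: in the right topological compactification one has $x\cdot y=\lim_{n\to\mathcal{U}}\lim_{m\to\mathcal{V}}g_{n}h_{m}$, and all the products $g_{n}h_{m}$ lie in a set $AB$ with $A,B$ Roelcke precompact, which is Roelcke precompact by Proposition \ref{R closed under products in LRPC} and hence has compact closure contained in $X$; so the iterated limit stays in $X$. With that inserted (and with your bridge lemma written out), your argument closes up.
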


\begin{proof}
	As mentioned above, this follows from \cite{ben2016weakly} for Roelcke precompact groups. And every locally compact Polish group is its own Roelcke completion, while in that case every function in $ C_{0}'(G) = C_{0}(G) $ is weakly almost periodic \cite[Corollary 4.2.13]{berglund1989analysis}. So assume that $ G $ is not in either of those classes.
	
	Suppose multiplication extends to a separately continuous semigroup operation on $ X $ and let $ f $ be a Roelcke uniformly continuous function in $ C'_{0}(G) $. Then by Proposition \ref{opc semitopological semigroup}, multiplication extends further to the one point compactification, $ X_{*} $. As $ f $ is Roelcke uniformly continuous on $ G $, it extends to a uniformly continuous function on $ X $ and since, for all $ \varepsilon > 0 $, the set $ \left\{x \in X \mid \left| f(x) \right| > \varepsilon \right\} $ is open, its closure is equal to the closure of $ \left\{g \in G \mid \left| f(g) \right| > \varepsilon \right\} $, and so is compact. Therefore $ f \in C_{0}(X) $, and so extends to continuous function on $ X_{*} $. Being a compact semitopological semigroup, every continuous function on $ X_{*} $ is weakly almost periodic and, moreover, the restriction of every weakly almost periodic function on $ X_{*} $ is a weakly almost periodic function on $ G $ \cite[Corollary 4.2.9 and Theorem 4.2.10]{berglund1989analysis}, so $ f \colon G \to \mathbb{C} $ is weakly almost periodic.
	
	Conversely, suppose that every Roelcke uniformly continuous function tending to $ 0 $ off the ideal of Roelcke precompact subsets is weakly almost periodic. Recall that left (resp. right) multiplication on $ G $ extends to a jointly continuous left (resp. right) action on $ X $, and suppose $ x,y \in X $ and $ (g_{n}),(h_{m}) $ are sequences in $ G $ with $ g_{n} \to x $ and $ h_{m} \to y $. Then both sequences are $ d_{\wedge} $-bounded, and therefore there are $ d_{\wedge} $-balls $ A $ and $ B $ around $ 1_{G} $ of sufficient radius so that all terms $ g_{n}h_{m} $, $ g_{n}y $ and $ xh_{m} $ lie in the interior of the compact set $ \overline{AB} \subseteq X $. Then for any nonprincipal ultrafilters $ \mathcal{U}, \mathcal{V} $ on $ \mathbb{N} $, the limits
	\[ \lim\limits_{n \to \mathcal{U}} g_{n}y \text{ and } \lim\limits_{m \to \mathcal{V}} xh_{m} \text{ and } \lim\limits_{n \to \mathcal{U}} \lim\limits_{m \to \mathcal{V}} g_{n}h_{m} \text{ and } \lim\limits_{m \to \mathcal{V}}\lim\limits_{n \to \mathcal{U}} g_{n}h_{m} \]
	all exist, and moreover by continuity of the actions,
	\[ \lim\limits_{n \to \mathcal{U}} g_{n}y = \lim\limits_{n \to \mathcal{U}} g_{n}(\lim\limits_{m \to \mathcal{V}} h_{m}) = \lim\limits_{n \to \mathcal{U}} \lim\limits_{m \to \mathcal{V}} g_{n} h_{m} \]
	and likewise $ \lim_{m \to \mathcal{V}} x h_{m} = \lim_{m \to \mathcal{V}} \lim_{n \to \mathcal{U}} g_{n}h_{m} $. The above is true for any locally Roelcke precompact Polish $ G $, but under the above weak almost periodicity assumption, we also see that $ \lim_{n \to \mathcal{U}} \lim_{m \to \mathcal{V}} g_{n}h_{m} = \lim_{m \to \mathcal{V}} \lim_{n \to \mathcal{U}} g_{n}h_{m} $ for all $ x,y,(g_{n}),(h_{m}), \mathcal{U} $ and $ \mathcal{V} $ as above. For suppose $ a, b \in X $ with 
	\[ a = \lim_{n \to \mathcal{U}} \lim_{m \to \mathcal{V}} g_{n}h_{m} \neq \lim_{m \to \mathcal{V}} \lim_{n \to \mathcal{U}} g_{n}h_{m} = b  \]
	and let $ f \colon G \to \mathbb{C} $ be the uniformly continuous function, $ f(g) = d_{\wedge}(a,b) - \min \{d_{\wedge}(a,b),d_{\wedge}(a,g)\} $, (where the distances are computed in $ X $, though note that the unique extension to a uniformly continuous function on $ X $ is given by the same formula). Then $ |f(g)| > 0 $ if and only if $ g $ is in the trace in $ G $ of the ball around $ a $ of radius $ d_{\wedge}(a,b) $, so $ f \in C_{0}'(G) $.
	
	So $ f $ is Roelcke uniformly continuous and in $ C_{0}'(G) $, yet
	\begin{align*}
	\lim\limits_{n \to \mathcal{U}}\lim\limits_{m \to \mathcal{V}} f(g_{n} h_{m}) &= f \left(\lim\limits_{n \to \mathcal{U}}\lim\limits_{m \to \mathcal{V}} g_{n} h_{m} \right) = f(a) \neq f(b) = \\
	&= f \left( \lim\limits_{m \to \mathcal{V}}\lim\limits_{n \to \mathcal{U}} g_{n} h_{m} \right) = \lim\limits_{m \to \mathcal{V}}\lim\limits_{n \to \mathcal{U}} f(g_{n} h_{m})
	\end{align*}
	and so $ f $ is not weakly almost periodic, a contradiction.
	
	Thus for every $ g_{n} \to x $ and $ h_{m} \to y $ and nonprincipal $ \mathcal{U} $ and $ \mathcal{V} $, the values $ \lim_{n \to \mathcal{U}} \lim_{m \to \mathcal{V}} g_{n}h_{m} $ and $ \lim_{m \to \mathcal{V}} \lim_{n \to \mathcal{U}} g_{n}h_{m} $ agree. But in fact, this is then also independent of both the choice of sequences and of ultrafilters. For if also  $ g_{n}' \to x $ and $ \mathcal{U}' $ is some nonprincipal ultrafilter on $ \mathbb{N} $,
	\[ \lim\limits_{m \to \mathcal{V}}\lim\limits_{n \to \mathcal{U}} g_{n}h_{m} = \lim\limits_{m \to \mathcal{V}} xh_{m} = \lim\limits_{m \to \mathcal{V}}\lim\limits_{n \to \mathcal{U'}} g_{n}'h_{m} \]
	and likewise $ \lim_{n \to \mathcal{U}} \lim_{m \to \mathcal{V}} g_{n}h_{m} = \lim_{n \to \mathcal{U}} \lim_{m \to \mathcal{V'}} g_{n}h_{m}' $ for any other $ h_{m} \to y $ and $ \mathcal{V}' $.
	
	Therefore we may define an operation $ X \times X \to X $, where $ x \cdot y $ is the common value of $ \lim_{n \to \mathcal{U}} \lim_{m \to \mathcal{V}} g_{n}h_{m} $ for any appropriate $ (g_{n}), (h_{m}), \mathcal{U} $ and $ \mathcal{V} $. If $ g_{n} \to x $, $ h_{m} \to y $ and $ k_{i} \to z $ then
	\begin{align*}
	(x \cdot y) \cdot z &= \lim_{i \to \mathcal{W}} \left(\lim_{n \to \mathcal{U}} g_{n}y\right)k_{i} = \lim_{i \to \mathcal{W}} \lim_{n \to \mathcal{U}} (g_{n}y)k_{i} = \lim_{i \to \mathcal{W}} \lim_{n \to \mathcal{U}} g_{n}(yk_{i}) \\
	&= \lim_{n \to \mathcal{U}}\lim_{i \to \mathcal{W}} g_{n}(yk_{i}) = \lim_{n \to \mathcal{U}} g_{n} \left(\lim_{i \to \mathcal{W}} yk_{i}\right) = x \cdot (y \cdot z)
	\end{align*}
	and so $ X $ with this operation is a semigroup. The constant sequences show that this operation extends multiplication in $ G $, and to see that multiplication is separately continuous, note that a function, $ f \colon X \to X $, on the complete metric space, $ (X,d_{\wedge}) $, is continuous if and only if for every $ x \in X $ and $ (g_{n}) \subseteq G $ with $ g_{n} \to x $, $ f(g_{n}) \to f(x) $, and that the latter occurs if and only if $ \lim_{ n \to \mathcal{U}}f(g_{n}) = f(x) $ for every non-principal ultrafilter, $ \mathcal{U} $. Then apply these observations to the functions $ x \mapsto xy $ and $ x \mapsto yx $.
\end{proof}

Note however that unlike the situation for Roelcke precompact Polish groups, where the Roelcke compactification coincides with the WAP compactification for such groups supporting a separately continuous extension of multiplication, we do not obtain a description of the WAP compactification in this setting. For example, as a consequence of \cite[Corollary 2.2]{ferri2004note}, the WAP compactification of a locally compact Polish SIN group must have cardinality $ 2^{2^{\aleph_{0}}} $.

\nocite{roe2003lectures}

\bibliographystyle{abbrv}
\bibliography{LRPClast}

\begin{thebibliography}{10}

\bibitem{ben2018eberlein}
I.~Ben~Yaacov, T.~Ibarluc{\'\i}a, and T.~Tsankov.
\newblock Eberlein oligomorphic groups.
\newblock {\em Transactions of the American Mathematical Society},
  370(3):2181--2209, 2018.

\bibitem{ben2016weakly}
I.~Ben~Yaacov and T.~Tsankov.
\newblock Weakly almost periodic functions, model-theoretic stability, and
  minimality of topological groups.
\newblock {\em Transactions of the American Mathematical Society},
  368(11):8267--8294, 2016.

\bibitem{berglund1989analysis}
J.~F. Berglund, H.~H.~D. Junghenn, and P.~Milnes.
\newblock {\em Analysis on semigroups: Function spaces, compactifications,
  representations}, volume~10.
\newblock New York: Wiley, 1989.

\bibitem{cohen2018large}
M.~P. Cohen.
\newblock On the large-scale geometry of diffeomorphism groups of 1-manifolds.
\newblock In {\em Forum Mathematicum}, volume~30, pages 75--86. De Gruyter,
  2018.

\bibitem{delhomme2007divisibility}
C.~Delhomm{\'e}, C.~Laflamme, M.~Pouzet, and N.~Sauer.
\newblock Divisibility of countable metric spaces.
\newblock {\em European Journal of Combinatorics}, 28(6):1746--1769, 2007.

\bibitem{dikranjan2017some}
D.~Dikranjan and N.~Zava.
\newblock Some categorical aspects of coarse spaces and balleans.
\newblock {\em Topology and its Applications}, 225:164--194, 2017.

\bibitem{ferri2004note}
S.~Ferri and D.~Strauss.
\newblock A note on the {WAP}--compactification and the {LUC}--compactification
  of a topological group.
\newblock In {\em Semigroup Forum}, volume~69, pages 87--101. Springer, 2004.

\bibitem{glasner2012group}
E.~Glasner.
\newblock The group {${\rm Aut}(\mu)$} is {R}oelcke precompact.
\newblock {\em Canad. Math. Bull.}, 55(2):297--302, 2012.

\bibitem{herndon2018absolute}
J.~Herndon.
\newblock Absolute continuity and large-scale geometry of {P}olish groups.
\newblock {\em arXiv preprint arXiv:1802.10239}, 2018.

\bibitem{hjorth1999vaught}
G.~Hjorth and S.~Solecki.
\newblock Vaught's conjecture and the {G}limm-{E}ffros property for {P}olish
  transformation groups.
\newblock {\em Transactions of the American Mathematical Society},
  351(7):2623--2641, 1999.

\bibitem{ibarlucia2016dynamical}
T.~Ibarluc{\'\i}a.
\newblock The dynamical hierarchy for {R}oelcke precompact {P}olish groups.
\newblock {\em Israel Journal of Mathematics}, 215(2):965--1009, 2016.

\bibitem{mann2017large}
K.~Mann and C.~Rosendal.
\newblock Large-scale geometry of homeomorphism groups.
\newblock {\em Ergodic Theory and Dynamical Systems}, pages 1--32, 2017.

\bibitem{nicas2012coarse}
A.~Nicas and D.~Rosenthal.
\newblock Coarse structures on groups.
\newblock {\em Topology and its Applications}, 159(14):3215--3228, 2012.

\bibitem{roe2003lectures}
J.~Roe.
\newblock {\em Lectures on coarse geometry}.
\newblock Number~31. American Mathematical Soc., 2003.

\bibitem{roelcke1981uniform}
W.~Roelcke and S.~Dierolf.
\newblock {\em Uniform structures on topological groups and their quotients}.
\newblock McGraw-Hill College, 1981.

\bibitem{rosendal-book}
C.~Rosendal.
\newblock Coarse geometry of topological groups.
\newblock Manuscript revision 23.

\bibitem{rosendal2009topological}
C.~Rosendal.
\newblock A topological version of the {B}ergman property.
\newblock In {\em Forum Mathematicum}, volume~21, pages 299--332. Walter de
  Gruyter GmbH \& Co. KG, 2009.

\bibitem{rosendal2013global}
C.~Rosendal.
\newblock Global and local boundedness of {P}olish groups.
\newblock {\em Indiana University Mathematics Journal}, pages 1621--1678, 2013.

\bibitem{rosendal2014largeaut}
C.~Rosendal.
\newblock Large scale geometry of automorphism groups.
\newblock {\em arXiv preprint arXiv:1403.3107}, 2014.

\bibitem{rosendal2014largemet}
C.~Rosendal.
\newblock Large scale geometry of metrisable groups.
\newblock {\em arXiv preprint arXiv:1403.3106}, 2014.

\bibitem{sauer2013distance}
N.~W. Sauer.
\newblock Distance sets of {U}rysohn metric spaces.
\newblock {\em Canad. J. Math}, 65(1):222--240, 2013.

\bibitem{tsankov2012unitary}
T.~Tsankov.
\newblock Unitary representations of oligomorphic groups.
\newblock {\em Geometric and Functional Analysis}, 22(2):528--555, 2012.

\bibitem{uspenskii1986universal}
V.~V. Uspenskij.
\newblock A universal topological group with a countable base.
\newblock {\em Functional analysis and its applications}, 20(2):160--161, 1986.

\bibitem{uspenskij1998roelcke}
V.~V. Uspenskij.
\newblock The {R}oelcke compactification of unitary groups.
\newblock {\em Lecture notes in pure and applied mathematics}, pages 411--420,
  1998.

\bibitem{uspenskij2001compactifications}
V.~V. Uspenskij.
\newblock Compactifications of topological groups.
\newblock In {\em Proceedings of the Ninth Prague Topological Symposium}, pages
  331--346, 2001.

\bibitem{uspenskij2008subgroups}
V.~V. Uspenskij.
\newblock On subgroups of minimal topological groups.
\newblock {\em Topology and its Applications}, 155(14):1580--1606, 2008.

\bibitem{zielinski2016automorphism}
J.~Zielinski.
\newblock An automorphism group of an $\omega$-stable structure that is not
  locally ({OB}).
\newblock {\em Mathematical Logic Quarterly}, 62(6):547--551, 2016.

\end{thebibliography}

\end{document}